\newtheorem{theorem}{Theorem}[section]
\newtheorem{lemma}[theorem]{Lemma}
\newtheorem{cor}[theorem]{Corollary}
\newtheorem{prop}[theorem]{Proposition}
\newtheorem{remark}[theorem]{Remark}
\newtheorem{definition}[theorem]{Definition}
 \newcommand{\Cov}{{\rm Cov}}
 \newcommand{\R}{\mathbb{R}}
 \newcommand{\N}{\mathbb{N}}
 \newcommand{\Z}{\mathbb{Z}}
 \def\1{{\mathchoice {1\mskip-4mu\mathrm l} 
{1\mskip-4mu\mathrm l}
{1\mskip-4.5mu\mathrm l} {1\mskip-5mu\mathrm l}}}
 \newcommand{\Dcal}{{\mathcal D}}
 \newcommand{\lfl}{\left\lfloor}
 \newcommand{\rfl}{\right\rfloor}
\newcommand{\X}{{\bf X}}
\newcommand{\Y}{{\bf Y}}
\renewcommand{\subsection}{\secdef \subsct\sbsect}
\newcommand{\subsct}[2][default]{\refstepcounter{subsection}
\vspace{0.15cm}
{\flushleft\bf \arabic{section}.\arabic{subsection}~\bf #1  }
\nopagebreak\nopagebreak}
\newcommand{\sbsect}[1]{\vspace{0.1cm}\noindent
{\bf #1}\vspace{0.1cm}}
\newenvironment{example}{\refstepcounter{theorem}
{\bf Example \thetheorem\ }\nopagebreak  }%
{\nopagebreak {\hfill\rule{2mm}{2mm}}\\ }
\newtheoremstyle{thm}{1.5ex}{1.5ex}{\itshape\rmfamily}{}
{\bfseries\rmfamily}{}{2ex}{}
\newtheoremstyle{rem}{1.3ex}{1.3ex}{\rmfamily}{}
{\itshape\rmfamily}{}{1.5ex}{}
\theoremstyle{rem}
\def\thebibliography#1{\section*{References}
  \list%
  {\arabic{enumi}.}
    {\settowidth\labelwidth{[#1]}\leftmargin\labelwidth
    \advance\leftmargin\labelsep
    \parsep0pt\itemsep0pt
    \usecounter{enumi}}
    \def\newblock{\hskip .11em plus .33em minus .07em}
    \sloppy                   
    \sfcode`\.=1000\relax}
\begin{document}

\title{A simple proof of distance bounds for Gaussian rough paths}

\author{\textsc{Sebastian Riedel and Weijun Xu} \\ 
\textit{Technische Universit\"{a}t Berlin and University of Oxford}}

\maketitle

\abstract{We derive explicit distance bounds for Stratonovich iterated integrals along two Gaussian processes (also known as signatures of Gaussian rough paths) based on the regularity assumption of their covariance functions. Similar estimates have been obtained recently in [Friz-Riedel, AIHP, to appear]. One advantage of our argument is that we obtain the bound for the third level iterated integrals merely based on the first two levels, and this reflects the intrinsic nature of rough paths. Our estimates are sharp when both covariance functions have finite $1$-variation, which includes a large class of Gaussian processes. 

Two applications of our estimates are discussed. The first one gives the a.s. convergence rates for approximated solutions to rough differential equations driven by Gaussian processes. In the second example, we show how to recover the optimal time regularity for solutions of some rough SPDEs. }

\begin{flushleft}
\textbf{Keywords:} Gaussian rough paths, iterated integrals, signatures

\smallskip

\textbf{AMS classification:} 60
\end{flushleft}

\bigskip

\section{Introduction}

The intersection between rough path theory and Gaussian processes has been an active research area in recent years (\cite{FV10G}, \cite{FV10}, \cite{H11}). The central idea of rough paths, as realized by Lyons (\cite{L98}), is that the key properties needed for defining integration against an irregular path do not only come from the path itself, but from the path together with a sequence of iterated integrals along the path, namely
\begin{align} \label{iterated integrals}
	\X^{n}_{s,t} = \int_{s < u_{1} < \cdots < u_{n} < t} dX_{u_{1}} \otimes \ldots \otimes dX_{u_{n}}. 
\end{align}
In particular, Lyons extension theorem shows that for paths of finite $p$-variation, the first $\lfl p \rfl$ levels iterated integrals determine all higher levels. For instance, if $p=1$, the path has bounded variation and the higher iterated integrals coincide with the usual Riemann-Stieltjes integrals. However, for $p\geq 2$, this is not true anymore and one has to say what the second (and possibly higher) order iterated integrals should be before they determine the whole rough path.

Lyons and Zeitouni (\cite{LZ98}) were the first to study iterated Wiener integrals in the sense of rough paths. They provide sharp exponential bounds on the iterated integrals of all levels by controlling the variation norm of the L\'{e}vy area. The case of more general Gaussian processes were studied by Friz and Victoir in \cite{FV10G} and \cite{FV10}. They showed that if $X$ is a Gaussian process with covariance of finite $\rho$-variation for some $\rho \in [1,2)$, then its iterated integrals in the sense of \eqref{iterated integrals} can be defined in a natural way and we can lift $X$ to a Gaussian rough path $\X$. 

In the recent work \cite{FR11b}, Friz and the first author compared the two lift maps $\X$ and $\Y$ for the joint process $(X, Y)$. It was shown that their average distance in rough paths topology can be controlled by the value $\sup_{t}|X_{t} - Y_{t}|_{L^2}^{\zeta}$ for some $\zeta >0$, and a sharp quantitative estimate for $\zeta$ was given. In particular, it was shown that considering both rough paths in a larger rough paths space (and therefore in a different topology) allows for larger choices of $\zeta$. Using this, the authors derived essentially optimal convergence rates for $\X^{\epsilon} \to \X$ in rough paths topology when $\epsilon \to 0$ where $\X^{\epsilon}$ is a suitable approximation of $\X$. 

In order to prove this result, sharp estimates of $|\X_{s,t}^{n} - \Y_{s,t}^{n}|$ need to be calculated on every level $n$. Under the assumption $\rho \in [1,\frac{3}{2})$, the sample paths of $\X$ and $\Y$ are $p$-rough paths for any $p > 2\rho$, hence we can always choose $p<3$ and therefore the first two levels determine the entire rough path. Lyons' continuity theorem then suggests that one only needs to give sharp estimates on level 1 and 2; the estimates on the higher levels can be obtained from the lower levels through induction. On the other hand, interestingly, one additional level was estimated "by hand" in \cite{FR11b} before performing the induction. To understand the necessity of computing this additional term, let us note from \cite{L98} that the standard distance for two deterministic $p$-rough paths takes the form of the smallest constant $C_n$ such that
\begin{align*}
|\X_{s,t}^{n} - \Y_{s,t}^{n}| \leq C_{n} \epsilon \omega(s,t)^{\frac{n}{p}}, \qquad n = 1, \cdots, \lfl p \rfl
\end{align*}
holds for all $s<t$ where $\omega$ is a control function to be defined later. The exponent on the control for the next level is expected to be 
\begin{align} \label{condition on the exponent}
\frac{n+1}{p} = \frac{\lfl p \rfl + 1}{p} > 1, 
\end{align}
so when one repeats Young's trick of dropping points in the induction argument (the key idea of the extension theorem), condition \eqref{condition on the exponent} will ensure that one can establish a maximal inequality for the next level. However, in the current problem where Gaussian randomness is involved, the $L^{2}$ distance for the first $\lfl 2 \rho \rfl$ iterated integrals takes the form
\begin{align*}
|\X_{s,t}^{n} - \Y_{s,t}^{n}|_{L^{2}} < C_{n} \epsilon \omega(s,t)^{\frac{1}{2 \gamma} + \frac{n-1}{2 \rho}}, \qquad n = 1, 2, \qquad \rho \in [1, \frac{3}{2}), 
\end{align*}
where $\gamma$ might be much larger than $\rho$. Thus, the '$n-1$' in the exponent leaves condition \eqref{condition on the exponent} unsatisfied, and one needs to compute the third level by hand before starting induction on $n$. 

In this article, we resolve the difficulty by moving part of $\epsilon$ to fill in the gap in the control so that the exponent for the third level control reaches $1$. In this way, we obtain the third level estimate merely based on the first two levels, and it takes the form
\begin{align*}
|\X_{s,t}^{3} - \Y_{s,t}^{3}|_{L^{2}} < C_{3} \epsilon^{\eta} \omega(s,t), 
\end{align*}
where $\eta \in (0,1]$, and its exact value depends on $\gamma$ and $\rho$. We see that there is a $1 - \eta$ reduction in the exponent of $\epsilon$, which is due to the fact that it is used to compensate the control exponent. This interplay between the 'rate' exponent and the control exponent can be viewed as an analogy to the relationship between time and space regularities for solutions to SPDEs. We will make the above heuristic argument rigorous in section 4. We also refer to the recent work \cite{LX11} for the situation of deterministic rough paths.

Our main theorem is the following.

\bigskip

\begin{theorem} \label{main theorem}
Let $(X,Y) = (X^{1},Y^{1}, \cdots, X^{d}, Y^{d}): [0,T] \rightarrow \R^{d+d}$ be a centered Gaussian process on a probability space $(\Omega,\mathcal{F},P)$ where $(X^i,Y^i)$ and $(X^j,Y^j)$ are independent for $i \neq j$, with continuous sample paths and covariance function $R_{(X,Y)}\colon [0,T]^2 \mapsto \R^{2d\times 2d}$. Assume further that there is a $\rho \in [1, \frac{3}{2})$ such that the $\rho$-variation of $R_{(X,Y)}$ is bounded by a finite constant $K$. Let $\gamma \geq \rho$ such that $\frac{1}{\gamma} + \frac{1}{\rho} > 1$. Then, for every $\sigma > 2 \gamma$, $N \geq \lfl \sigma \rfl$, $q \geq 1$ and every $\delta > 0$ small enough, there exists a constant $C = C(\rho,\gamma,\sigma,K,\delta,q)$ such that
\begin{enumerate}

\item If $\frac{1}{2\gamma} + \frac{1}{\rho} > 1$, then
\begin{align*}
|\varrho_{\sigma-\text{var}}^{N}(\X,\Y)|_{L^{q}} \leq C \sup_{t \in [0,T]} |X_{t} - Y_{t}|_{L^{2}}^{1 - \frac{\rho}{\gamma}}. 
\end{align*}

\item If $\frac{1}{2\gamma} + \frac{1}{\rho} \leq 1$, then
\begin{align*}
|\varrho_{\sigma-\text{var}}^{N}(\X,\Y)|_{L^{q}} \leq C \sup_{t \in [0,T]} |X_{t} - Y_{t}|_{L^{2}}^{3 - 2 \rho - \delta}. 
\end{align*}

\end{enumerate}
\end{theorem}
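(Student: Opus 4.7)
Write $\epsilon := \sup_{t \in [0,T]} |X_t - Y_t|_{L^2}$ and let $\omega$ denote a $2$D control dominating the $\rho$-variation of $R_{(X,Y)}$. My plan is to split the argument by level of the signature: first produce deterministic-in-$\omega$ bounds on $|\X^n_{s,t} - \Y^n_{s,t}|_{L^2}$, then boost to $L^q$ via Gaussian hypercontractivity, and finally pass to the $\sigma$-variation norm through a Kolmogorov / Besov--variation embedding. To start, for $n \in \{1,2\}$ I would establish
\begin{align*}
|\X^n_{s,t} - \Y^n_{s,t}|_{L^2} \leq C_n \, \epsilon \, \omega(s,t)^{\frac{1}{2\gamma} + \frac{n-1}{2\rho}}.
\end{align*}
For $n=1$ one interpolates the trivial $\epsilon$-bound against the $\omega^{1/(2\rho)}$-bound coming from the $\rho$-variation of the covariance; the exponent $1/(2\gamma)$ appears because $\gamma \geq \rho$. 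For $n=2$ the iterated integral is expressed as a $2$D Young integral against $R_{(X,Y)}$, justified by $1/\gamma + 1/\rho > 1$, and the level~$1$ bound supplies the $\epsilon$-factor.

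\textbf{Step 2 (Level $3$ by interpolation -- the key step).} This is where I would depart from \cite{FR11b}. Instead of re-estimating the third level directly as a $2$D integral, I combine the "$\epsilon$-bound" at level $3$ (whose natural $\omega$-exponent $\tfrac{1}{2\gamma} + \tfrac{1}{\rho}$ is less than $1$ in case~(ii)) with the crude bound
\begin{align*}
|\X^3_{s,t}|_{L^2} + |\Y^3_{s,t}|_{L^2} \leq C \, \omega(s,t)^{\frac{3}{2\rho}},
\end{align*}
whose $\omega$-exponent exceeds $1$ since $\rho < \tfrac{3}{2}$. Taking the geometric mean with weight $\theta \in (0,1]$ chosen so that the resulting $\omega$-exponent equals $1$ produces
\begin{align*}
|\X^3_{s,t} - \Y^3_{s,t}|_{L^2} \leq C \, \epsilon^{\eta} \, \omega(s,t),
\end{align*}
where $\eta = 1$ in case~(i) (the $\epsilon$-bound already suffices) and $\eta \in (0,1)$ in case~(ii), with the explicit value computable from $\rho$ and $\gamma$. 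The decisive gain is that the control exponent is now exactly $1$, so Young's "drop a point" inequality can be iterated.

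\textbf{Step 3 (Higher levels, hypercontractivity, variation norm).} With controls $|\X^n_{s,t} - \Y^n_{s,t}|_{L^2} \leq C_n \, \epsilon^{\eta} \, \omega(s,t)^{\beta_n}$ and $\beta_n \geq 1$ in hand for $n \leq 3$, Lyons' extension theorem (induction on $n$ via a sewing-type maximal inequality) propagates the estimate to every level $n \geq 3$ with $\beta_n = n/(2\rho)$. Since each $\X^n - \Y^n$ lives in a fixed Wiener chaos, Gaussian hypercontractivity upgrades these $L^2$-bounds to $L^q$-bounds with only a $q$-dependent loss. A Besov--variation embedding of Kolmogorov type then converts the pointwise $L^q$-bounds into an $L^q$-bound on $\varrho^N_{\sigma\text{-var}}(\X,\Y)$; the hypothesis $\sigma > 2\gamma$ supplies the regularity headroom to absorb the embedding loss, and a final geometric-mean interpolation at each level (now driving the $\omega$-exponent down to $n/\sigma$) produces the stated exponents $1 - \rho/\gamma$ in case~(i) and $3 - 2\rho - \delta$ in case~(ii).

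\textbf{Main obstacle.} The heart of the argument is Step~2: the interpolation must be calibrated so that the $\omega$-exponent reaches exactly $1$ while sacrificing as little of $\epsilon$ as possible. Balancing this correctly is what distinguishes the two cases and allows the level-$3$ estimate to be produced purely from the first two levels -- the distinctive simplification relative to \cite{FR11b}.
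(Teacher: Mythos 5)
The overall scaffolding -- level $1$--$2$ $L^2$ estimates via $2$D Young, a level-$3$ estimate obtained by balancing an $\epsilon$-bound against a crude bound so that the $\omega$-exponent reaches $1$, then Lyons extension, hypercontractivity, and a Kolmogorov-type passage to the variation norm -- matches the paper's plan, and your exponent arithmetic lands on the correct $3-2\rho$. But the heart of the argument, your Step~$2$, has a genuine gap: you treat the level-$3$ $\epsilon$-bound
\begin{align*}
	|\X^3_{s,t}-\Y^3_{s,t}|_{L^2}\ \lesssim\ \epsilon\,\omega(s,t)^{\frac{1}{2\gamma}+\frac{1}{\rho}}
\end{align*}
as available and then interpolate it against the crude bound $\omega^{3/(2\rho)}$. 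In case~(ii) this bound is precisely what cannot be obtained from levels $1$ and $2$: the extension-theorem telescoping sum would use the individual-drop estimate $\epsilon\bigl(\tfrac{1}{L}\omega\bigr)^{\frac{1}{2\gamma}+\frac{1}{\rho}}$, and since $\tfrac{1}{2\gamma}+\tfrac{1}{\rho}\le 1$ the sum $\sum_L L^{-\beta}$ diverges, so this "run" of the extension theorem produces nothing. The only other route to the level-$3$ $\epsilon$-bound is the direct $2$D-Young computation of the triple iterated integral that \cite{FR11b} performed -- exactly the calculation you announced you would avoid. You cannot interpolate a bound you have not established.

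What the paper actually does (Theorem~\ref{third level estimate}) is a term-by-term, not a post-hoc, interpolation. In the telescoping sum for $(\hat{\X}^D_{s,t}-\hat{\Y}^D_{s,t})^3$, each dropped-point contribution admits \emph{two} bounds simultaneously, $\epsilon\bigl(\tfrac{1}{L}\omega\bigr)^{\frac{1}{2\gamma'}+\frac{2}{p}}$ and $\bigl(\tfrac{1}{L}\omega\bigr)^{3/p}$, because both factors in $\mathbf{R}^k\otimes\mathbf{X}^{3-k}$, etc., are of level $\le 2$. Taking the \emph{minimum} at each $L$ and splitting the sum at the crossover integer $N$ in \eqref{the unique integer N} is what makes both pieces summable: the $\epsilon$-piece because it runs only up to $N$, the crude piece because its exponent $3/p>1$. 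After summing, one reads off the exponent $\frac{3-p}{1-p/2\gamma'}$ on $\epsilon$. This interleaving inside the extension sum is the new idea, and it is precisely what your proposal skips by assuming the level-$3$ $\epsilon$-bound up front. (Two smaller points: the paper's $\epsilon$ is $\sup_t|X_t-Y_t|_{L^2}^{1-\rho/\gamma}$, not the unpadded $L^2$-sup -- the $1-\rho/\gamma$ factor is already spent in the $2$D interpolation of $V_\gamma(R_{X-Y})$ against $V_\infty$ at levels $1$--$2$, so your Step~$1$ display should carry that power; and the final loss $\delta$ comes from having to take $p>2\rho$ and $\gamma'>\gamma$ strictly when converting $L^2$ H\"older bounds into variation bounds for the random control, which your "Besov--variation embedding" gestures at but does not track.)
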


\bigskip

The proof of this theorem will be postponed to section 4.2 after we have established all the estimates needed. We first give two remarks.

\bigskip

\begin{remark}
We emphasize that the constant $C$ in the above theorem depends on the process $(X,Y)$ \textit{only} through the parameters $\rho$ and $K$. 
\end{remark}

\bigskip

\begin{remark}
For $N = \lfl \sigma \rfl$, $\varrho_{\sigma-\text{var}}^{N}$ denotes an inhomogeneous rough paths metric. See section \ref{sec_rp_essentials} for the precise definition.
\end{remark}

\bigskip

Our paper is structured as follows. In section 2, we provide some important concepts and notations from rough path theory that are necessary for our problem. In section 3, we introduce the class of Gaussian processes which possess a lift to Gaussian rough paths and estimate the difference of two Gaussian rough paths on level one and two. Section 4 is devoted to the proof of the main theorem. We first obtain the third level estimate directly from the first two levels, which requires a technical extension of Lyons' continuity theorem, and justify the heuristic argument above rigorously. All higher level estimates are then obtained with the induction procedure in \cite{L98}, and the claim of the main theorem follows. In section 5, we give two applications of our main theorem. The first one deals with convergence rates for Wong-Zakai approximations in the context of rough differential equations. The second example shows how to derive optimal time regularity for the solution of a modified stochastic heat equation seen as an evolution in rough paths space.

\bigskip

\textsc{Notations.} Throughout the paper, $C, C_{n}, C_{n}(\rho, \gamma)$ will denote constants depending on certain parameters only, and their actual values may change from line to line.

\bigskip

\textsc{Acknowledgements.} We wish to thank our advisors, Peter Friz and Terry Lyons, for their helpful discussions and support during the project. S.Riedel is supported by a PhD scholarship from the Berlin Mathematical School (BMS). W.Xu is supported by the Oxford-Man Institute Scholarship.

\bigskip

\bigskip

\section{Elements from Rough path theory}\label{sec_rp_essentials}

In this section, we introduce the concepts and definitions from rough path theory that are necessary for our current application. For a detailed account of the theory, we refer readers to \cite{FV10}, \cite{LCL06} and \cite{LQ02}. 

Fix the time interval $[0,T]$. For all $s < t \in [0,T]$, let $\Delta_{s,t}$ denote the simplex
\begin{align*}
\{ (u_{1}, u_{2})\ |\ s \leq u_{1} \leq u_{2} \leq t \}, 
\end{align*}
and we simply write $\Delta$ for $\Delta_{0,T}$. In what follows, we will use $x$ to denote an $\R^{d}$-valued path, and $X$ to denote a stochastic process in $\R^{d}$, which is a Gaussian process in the current paper. For any integer $N$, let
\begin{align*}
T^{N}(\R^{d}) = \R \oplus \R^{d} \oplus \cdots \oplus (\R^{d})^{\otimes N} 
\end{align*}
denote the truncated tensor algebra. The space of all continuous bounded variation paths $x\colon [0,T] \to \R^d$ is denoted by $C^{1-var}(\R^{d})$. For a path $x \in C^{1-var}(\R^{d})$, we use the bold letter $\textbf{X}$ to denote its $n$-th level iterated tensor integral: 
\begin{align*}
\X_{s,t}^{n} = \int_{s < u_{1} < \cdots < u_{n} < t} dx_{u_{1}} \otimes \cdots \otimes dx_{u_{n}}. 
\end{align*}
The lift map $S_{N}$ taking $x$ to a $T^{N}(\R^{d})$-valued path is defined by
\begin{align*}
S_{N}(x)_{s,t} = 1 + \sum_{n=1}^{N} \X_{s,t}^{n}. 
\end{align*}
For a path $x$, write $x_{s,t} = x_{t} - x_{s}$ and we have $x_{s,t} = \textbf{X}_{s,t}^{1}$. It is well known that $S_{N}(x)$ is a multiplicative functional, that is, for any $s < u < t$, we have
\begin{align*}
S_{N}(x)_{s,u} \otimes S_{N}(x)_{u,t} = S_{N}(x)_{s,t}, 
\end{align*}
where the multiplication takes place in $T^{N}(\R^{d})$.

For each subspace $(\R^{d})^{\otimes n}$, there is an associated tensor norm $|\cdot|$. If $\X, \Y$ are two multiplicative functionals in $T^N(\R^d)$, then for each $p \geq 1$, we define their $p$-variation distance by
\begin{align*}
\varrho_{p-var}^{N}(\X, \Y):= \max_{n \leq N} \sup_{\mathcal{P}} \big(\sum_{i} |\X_{t_{i}, t_{i+1}}^{n} - \Y_{t_{i}, t_{i+1}}^{n}|^{\frac{p}{n}} \big)^{\frac{n}{p}}, 
\end{align*}
where the supremum is taken over all finite partitions of the interval $[0,T]$. If $N= \lfl p \rfl$, this defines a rough paths metric and we only write $\varrho_{p-var}(\X, \Y)$ in this case.

\bigskip

\begin{remark}
Note that we only consider so-called \emph{inhomogeneous} rough paths metrics in this paper. The reason for this is that the It\={o}-Lyons solution map for rough paths is locally Lipschitz with respect to these metrics (cf. \cite[Chapter 10]{FV10}).
\end{remark}

\bigskip

We define the subset $G^{N}(\R^{d}) \subset T^{N}(\R^{d})$ to be
\begin{align*}
G^{N}(\R^{d}) = \{ S_{N}(x)_{0,1}: x \in C^{1-var}(\R^{d}) \}. 
\end{align*}
The multiplicativity of $S_{N}$ implies that $G^{N}(\R^{d})$ is a group with multiplication $\otimes$ and identity element $1$. If $x \in C^{1-var}(\R^{d})$, one can shows that actually $S_{N}(x)_{s,t} \in G^{N}(\R^{d})$ for all $s<t$.

\bigskip

\begin{definition}
A function $\omega: \Delta \rightarrow \R^{+}$ is called a \emph{control} if it is continuous, vanishes on the diagonal, and is superadditive in the sense that for any $s < u < t$, we have
\begin{align*}
\omega(s,u) + \omega(u,t) \leq \omega(s,t). 
\end{align*}
\end{definition}

\bigskip

We say a multiplicative functional $\X$ in $T^{N}(\R^{d})$ has finite $p$-variation ($p \geq 1$) controlled by $\omega$ if for each $n \leq N$, there exists a constant $C_{n}$ such that for all $s < t$, we have
\begin{align*}
|\X_{s,t}^{n}| \leq C_{n} \omega(s,t)^{\frac{n}{p}}. 
\end{align*}

\bigskip

\begin{definition}
Let $p \geq 1$. A \emph{geometric $p$-rough path} is a continuous path in $G^{\lfl p \rfl}(\R^{d})$ which is in the $p$-variation closure (w.r.t the metric $\varrho_{p-var}$) of the set of bounded variation paths. We use $C^{0, p-var}([0,T], G^{\lfl p \rfl}(\R^{d}))$ to denote the space of geometric $p$-rough paths.
\end{definition}

\bigskip

By Lyon's extension theorem (cf. \cite[Theorem 2.2.1]{L98}), every (geometric) $p$-rough path $\X$ can be lifted to a $q$-rough path for every $q \geq p$. Abusing notation, we will use the same letter $\X$ to denote this lift. We will write $\X^n = \pi_n(\X)$ where $\pi_n$ denotes the projection of $T^N(\R^d)$ onto the $n$-th tensor product, $n\leq N$. If $x = \pi_1(\X)$, we will also use the notation
	\begin{align*}
		\X^n_{s,t} = \int_{\Delta_{s,t}^n} dx \otimes \cdots \otimes dx
	\end{align*}
(even though this integral does not have to exist as a limit of Riemann sums). 
\bigskip

\bigskip

\section{2D variation and Gaussian rough paths}

If $I=[a,b]$ is an interval, a \emph{dissection of $I$} is a finite subset of points of the form $\{a=t_0< \ldots < t_m =b\}$. The family of all dissections of $I$ is denoted by $\Dcal (I)$. 

Let $I \subset \R$ be an interval and $A=[a,b]\times [c,d]\subset I \times I$ be a rectangle. 
If $f\colon I\times I \to V$ is a function, mapping into a normed vector space $V$, we define the \emph{rectangular increment} $f(A)$ by setting

\begin{align*}
	f(A) := f\left( \begin{array}{c} a,b \\ c,d \end{array} \right) 
	:=	f\left( \begin{array}{c} b \\ d \end{array} \right)
	- f\left( \begin{array}{c} a \\ d \end{array} \right)
	- f\left( \begin{array}{c} b \\ c \end{array} \right)
	+ f\left( \begin{array}{c} a \\ c \end{array} \right).
\end{align*}

\bigskip

\begin{definition}
	Let $p\geq 1$ and $f\colon I \times I \to V$. For $[s,t]\times[u,v] \subset I \times I$, set
	\begin{align*}
		V_{p}(f;[s,t]\times[u,v]) := \left( \sup_{\substack{(t_i) \in \Dcal([s,t]) \\ (t'_j) \in \Dcal([u,v])}} \sum_{t_i,t'_j}
		\left| f\left( \begin{array}{c} t_i,t_{i+1} \\ t'_j,t'_{j+1} \end{array}\right) \right|^p
		\right)^{\frac{1}{p}}.
	\end{align*}
	If $V_{p}(f,I\times I)<\infty$, we say that $f$ has finite ($2D$) $p$-variation. We also define
	\begin{align*}
		V_{\infty}(f;[s,t]\times[u,v]) := \sup_{\substack{ \sigma, \tau \in [s,t] \\ \mu, \nu \in [u,v]}} \left| f\left( \begin{array}{c} \sigma, \tau \\ \mu, \nu \end{array} \right) \right|
	\end{align*}
\end{definition}

%
%
%


\bigskip

\begin{lemma}\label{lemma:infinterpolation}
	Let $f\colon I\times I\to V$ be a continuous map and $1\leq p \leq p' < \infty$. Assume that $f$ has finite $p$-variation. Then for every $[s,t]\times[u,v] \subset I\times I$ we have
		\begin{align*}
			V_{p'}(f;[s,t]\times [u,v]) \leq V_{\infty}(f;[s,t]\times [u,v])^{1-\frac{p}{p'}}\, V_{p}(f;[s,t]\times [u,v])^{\frac{p}{p'}}
		\end{align*}

\end{lemma}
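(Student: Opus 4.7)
The inequality is a standard interpolation bound of the type $\ell^{p'} \leq \ell^\infty$-weighted $\ell^p$, applied on the level of rectangular increments. The plan is to rewrite each summand $|f(A)|^{p'}$ as $|f(A)|^{p} \cdot |f(A)|^{p'-p}$ and then pull out the uniform bound on the second factor.

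More precisely, fix any two dissections $(t_i) \in \Dcal([s,t])$ and $(t'_j) \in \Dcal([u,v])$, and write $A_{ij} = [t_i,t_{i+1}]\times[t'_j,t'_{j+1}]$. For every $i,j$ the rectangular increment $f(A_{ij})$ is one of the admissible quantities in the supremum defining $V_\infty(f;[s,t]\times[u,v])$, so $|f(A_{ij})| \leq V_\infty(f;[s,t]\times[u,v])$. Since $p' \geq p$, I would then estimate
\begin{align*}
\sum_{i,j} |f(A_{ij})|^{p'}
= \sum_{i,j} |f(A_{ij})|^{p'-p}\,|f(A_{ij})|^{p}
\leq V_{\infty}(f;[s,t]\times[u,v])^{p'-p}\,\sum_{i,j}|f(A_{ij})|^{p}.
\end{align*}
The remaining sum is bounded by $V_p(f;[s,t]\times[u,v])^p$ by definition of the 2D $p$-variation.

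Taking the supremum over all dissections on the left-hand side, raising to the power $1/p'$, and observing that the right-hand side does not depend on the dissections anymore, one obtains
\begin{align*}
V_{p'}(f;[s,t]\times[u,v]) \leq V_{\infty}(f;[s,t]\times[u,v])^{\frac{p'-p}{p'}}\,V_{p}(f;[s,t]\times[u,v])^{\frac{p}{p'}},
\end{align*}
which is exactly the claim since $(p'-p)/p' = 1 - p/p'$.

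There is essentially no obstacle: the only minor care is the case $p = p'$, where the bound is trivial (with the convention $V_\infty^0 = 1$), and the case where $V_p$ or $V_\infty$ is infinite (in which case there is nothing to prove). Continuity of $f$ is not strictly used in the argument above; it only guarantees finiteness and the usual measurability properties relevant elsewhere in the paper.
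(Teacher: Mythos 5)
Your proof is correct and follows exactly the same route as the paper's: fix the dissections, factor $|f(A_{ij})|^{p'}=|f(A_{ij})|^{p'-p}|f(A_{ij})|^{p}$, pull out the $V_\infty$ bound on the first factor, and take the supremum. The extra remarks on the trivial and infinite cases are harmless but not needed.
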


\begin{proof}
	Let $(t_i) \in \Dcal([s,t])$ and $(t'_j) \in \Dcal([u,v])$. Then,
	\begin{align*}
		\sum_{t_i,t'_j}	\left| f\left( \begin{array}{c} t_i,t_{i+1} \\ t'_j,t'_{j+1} \end{array}\right) \right|^{p'} \leq V_{\infty}(f;[s,t]\times[u,v])^{p'-p} \sum_{t_i,t'_j}	\left| f\left( \begin{array}{c} t_i,t_{i+1} \\ t'_j,t'_{j+1} \end{array}\right) \right|^{p}.
	\end{align*}
	Taking the supremum over all partitions gives the claim.
\end{proof}

\bigskip
%
%
%
%
%
%

%
%

\begin{lemma}\label{lemma:exist_control}
	Let $f\colon I\times I \to \R$ be continuous with finite $p$-variation. Choose $p'$ such that $p' \geq p$ if $p=1$ and $p' > p$ if $p>1$. Then there is a control $\omega$ and a constant $C=C(p,p')$ such that
	\begin{align*}
		V_{p'}(f;J\times J) \leq \omega(J)^{\frac{1}{p'}} \leq C V_{p}(f;J\times J)
	\end{align*}
	holds for every interval $J\subset I$.
\end{lemma}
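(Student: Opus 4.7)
The plan is to set
\begin{equation*}
	\omega(s,t) := V_{p'}(f;[s,t]\times[s,t])^{p'}
\end{equation*}
on the simplex $\Delta$ and verify directly that this is a control satisfying both inequalities, with $C=1$.

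First I would check superadditivity. Given $s<u<t$, let $(t_i)\in \Dcal([s,u])$ and $(t'_j)\in \Dcal([u,t])$ be arbitrary dissections. Their union $(t_i)\cup(t'_j)\in \Dcal([s,t])$, used simultaneously as the dissection of both axes in the definition of $V_{p'}(f;[s,t]^2)$, produces a sum of $|f|^{p'}$ over rectangles that decomposes into the sum over $(t_i)^2$, the sum over $(t'_j)^2$, and non-negative mixed terms. Dropping the mixed terms and taking suprema in the two dissections independently gives $\omega(s,u)+\omega(u,t)\leq \omega(s,t)$. The lower inequality $V_{p'}(f;J\times J)\leq \omega(J)^{1/p'}$ holds by construction. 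For the upper inequality, since any single subrectangle of $J\times J$ is itself a one-cell dissection, one has $V_{\infty}(f;J\times J)\leq V_{p}(f;J\times J)$; combined with Lemma \ref{lemma:infinterpolation} this yields
\begin{equation*}
	\omega(J)^{1/p'} = V_{p'}(f;J\times J)\leq V_{\infty}(f;J\times J)^{1-p/p'}\,V_{p}(f;J\times J)^{p/p'}\leq V_{p}(f;J\times J).
\end{equation*}

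The remaining step is to confirm that $\omega$ is continuous on $\Delta$ and vanishes on the diagonal. Vanishing on the diagonal is immediate from the same interpolation bound: $\omega(s,t)\leq V_\infty(f;[s,t]^2)^{p'-p}V_p(f;I\times I)^p$, and $V_\infty(f;[s,t]^2)\to 0$ as $t\to s$ by uniform continuity of $f$ on compacts (if $p=p'=1$, one uses the continuity of $f$ directly, since $V_1(f;\{t\}^2)=0$). Continuity at an interior point $(s,t)$ with $s<t$ follows from monotonicity (an immediate consequence of superadditivity) together with an expansion argument: for $[s',t']\supset [s,t]$ the difference $\omega(s',t')-\omega(s,t)$ can, after refining any dissection of $[s',t']$ to include $s$ and $t$, be bounded by a sum of $V_{p'}^{p'}$ over a finite family of thin side/corner rectangles whose short dimension tends to zero, and on each such thin rectangle the interpolation bound again forces $V_{p'}^{p'}\to 0$.

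The only delicate point is the continuity argument at interior points, and this is precisely where the hypothesis ``$p'>p$ when $p>1$'' is used: the $V_\infty^{p'-p}$ factor from Lemma \ref{lemma:infinterpolation} would disappear if $p'=p>1$, and the resulting $V_p$ of a narrow rectangle need not vanish. Superadditivity and the two bounds themselves are completely routine; the control constant $C=1$ falls out of the interpolation lemma with no additional work.
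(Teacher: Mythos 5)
Your approach---constructing the control explicitly as $\omega(s,t):=V_{p'}(f;[s,t]^2)^{p'}$---is genuinely different from the paper's, which gives no proof and simply cites \cite[Theorem~1]{FV11}. The construction is the natural candidate; your verification of the two bounds is correct, and the observation that one can take $C=1$ by combining $V_\infty \leq V_p$ with Lemma~\ref{lemma:infinterpolation} is a nice bonus. One small slip in the superadditivity argument: the definition of $V_{p'}(f;[s,u]^2)$ takes the supremum over \emph{pairs} of dissections, one per axis, which may differ. Using the same dissection $(t_i)\cup(t'_j)$ on both axes only controls the ``symmetric'' supremum. The fix is routine (take $(t_i),(s_i)\in\Dcal([s,u])$ and $(t'_j),(s'_j)\in\Dcal([u,t])$, and use $(t_i)\cup(t'_j)$, $(s_i)\cup(s'_j)$ on the two axes), but should be stated.

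The real problem is the continuity argument, which you rightly flag as the delicate point but which has a genuine gap. For $p'>1$ the quantity $\sum |f(\cdot)|^{p'}$ is \emph{not} monotone under refinement of the grid: replacing a cell $A$ by two subcells $A_1,A_2$ replaces $|f(A)|^{p'}$ with $|f(A_1)|^{p'}+|f(A_2)|^{p'}$, and neither quantity dominates the other in general (convexity of $x\mapsto x^{p'}$ works against you when $f(A_1),f(A_2)$ have the same sign; cancellation when they have opposite signs works the other way). So ``refining any dissection of $[s',t']$ to include $s$ and $t$'' does not bound the original sum, and hence does not bound $\omega(s',t')$: the supremum defining $\omega(s',t')$ may well be approached by coarse grids whose cells straddle $s$ and $t$, and splitting those cells costs an uncontrolled multiplicative factor (at best $2^{p'-1}$ per split, which destroys the limit $\omega(s',t')\to\omega(s,t)$). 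Controlling those straddling cells without a constant loss is precisely the nontrivial content of \cite[Theorem~1]{FV11}; the proposal signals that the continuity step is ``delicate'' but does not actually close it. The sub-case $p=p'=1$ also needs its own argument (there you do get monotonicity under refinement from the triangle inequality, but the interpolation factor $V_\infty^{p'-p}$ disappears, so the vanishing of $V_1$ on thin strips must be shown directly; ``$V_1(f;\{t\}^2)=0$'' alone does not give it).
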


\begin{proof}
	Follows from \cite[Theorem 1]{FV11}.
\end{proof}

%
%
%
%
%
%

\bigskip

Let $X = (X^1,\ldots, X^d) \colon I \to \R^d$ be a centered, stochastic process. Then the covariance function $R_X(s,t) := \Cov_X(s,t) = E(X_s \otimes X_t)$ is a map $R_X \colon I\times I \to \R^{d\times d}$ and we can ask for its $\rho$-variation (we will use the letter $\rho$ instead of $p$ in this context). Clearly, $R_X$ has finite $\rho$-variation if and only if for every $i,j \in \{1,\ldots,d\}$ the map $s,t \mapsto E(X^i_s X^j_t)$ has finite $\rho$-variation. In particular, if $X^i$ and $X^j$ are independent for $i\neq j$, $R_X$ has finite $\rho$-variation if and only if $R_{X^i}$ has finite $\rho$-variation for every $i=1,\ldots,d$. In the next example, we calculate the $\rho$-variation for the covariances of some well-known real valued Gaussian processes. In particular, we will see that many interesting Gaussian processes have a covariance of finite $1$-variation. 

\bigskip

\begin{flushleft}
\begin{example}
	\begin{enumerate}
		\item Let $X=B$ be a Brownian motion. Then $R_B(s,t)=\min\{s,t\}$ and thus, for $A=[s,t]\times [u,v]$,
		\begin{align*}
			\left| R(A)\right| = \left| (s,t)\cap (u,v)\right| = \int_{[s,t]\times[u,v]} \delta_{x=y}\, dx\, dy. 
		\end{align*}
		This shows that $R_B$ has finite $1$-variation on any interval $I$.
		
		\item More generally, let $f\colon [0,T]\to \R$ be a left-continuous, locally bounded function. Set
		\begin{align*}
			X_t = \int_0^t f(r)\,dB_r.
		\end{align*}
		Then, for $A=[s,t]\cap[u,v]$ we have by the It\={o} isometry,
		\begin{align*}
			R_X(A) = E\left[\int_{[s,t]} f \,dB \int_{[u,v]} f\,dB \right] = \int_{[s,t]\times [u,v]} \delta_{x=y} f(x) f(y)\,dx\,dy
		\end{align*}
		which shows that $R_X$ has finite $1$-variation.		
		
		\item Let $X$ be an Ornstein-Uhlenbeck process, i.e. $X$ is the solution of the SDE
		\begin{align}\label{eqn_OU-SDE}
			dX_t = -\theta X_t\, dt + \sigma\,dB_t 
		\end{align}
		for some $\theta, \sigma >0$. If we claim that $X_0 =0$, one can show that $X$ is centered, Gaussian and a direct calculation shows that the covariance of $X$ has finite $1$-variation on any interval $[0,T]$. The same is true considering the stationary solution of \eqref{eqn_OU-SDE} instead.
		
		\item If $X$ is a continuous Gaussian martingale, it can be written as a time-changed Brownian motion. Since the $\rho$-variation of its covariance is invariant under time-change, $X$ has again a covariance of finite $1$-variation.
		
		\item If $X\colon [0,T]\to\R$ is centered Gaussian with $X_0 = 0$, we can define a Gaussian bridge by
		\begin{align*}
			X_{\text{Bridge}}(t) = X_t - t \frac{X_T}{T}.
		\end{align*}
		One can easily show that if the covariance of $X$ has finite $\rho$-variation, the same is true for $X_{\text{Bridge}}$. In particular, Brownian bridges have finite $1$-variation.

	\end{enumerate}
\end{example}
\end{flushleft}

Next, we cite the fundamental existence result about Gaussian rough paths. For a proof, cf. \cite{FV10G} or \cite[Chapter 15]{FV10}.

\begin{theorem}[Friz, Victoir]
	Let $X\colon [0,T] \to \R^d$ be a centered Gaussian process with continuous sample paths and independent components. Assume that there is a $\rho \in [1,2)$ such that $V_{\rho}(R_X ; [0,T]^2) < \infty$. Then $X$ admits a lift $\X$ to a process whose sample paths are geometric $p$-rough paths for any $p>2\rho$, i.e. with sample paths in $C^{0,p-\text{var}}([0,T],G^{\lfloor p \rfloor }(\R^d))$ and $\pi_1(\X_{s,t}) = X_t - X_s$ for any $s<t$.  
\end{theorem}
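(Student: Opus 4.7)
The plan is to construct $\mathbf{X}$ as the limit of lifts of smooth approximations to $X$ and verify convergence in $p$-variation rough path metric. Concretely, for each $n$, let $X^n$ be the piecewise linear interpolation of $X$ along the dyadic partition $\{kT/2^n\}$. Each $X^n$ is a.s.\ of bounded variation, so its canonical lift $\mathbf{X}^n := S_{\lfloor p\rfloor}(X^n)$ exists as iterated Riemann--Stieltjes integrals and is a geometric $p$-rough path by construction. The theorem will follow once we show that $(\mathbf{X}^n)$ is Cauchy in $\varrho_{p\text{-var}}$ almost surely: the limit then lies in $C^{0,p\text{-var}}([0,T],G^{\lfloor p\rfloor}(\R^d))$, and its projection onto the first level is $X$ since $X^n_t \to X_t$ uniformly by continuity of sample paths.

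The technical heart is the second-level estimate. For any $s<t$ and indices $i\neq j$, using independence of $X^i,X^j$, the isometry
\[
\E\!\left[\bigl|\mathbf{X}^{n,2,ij}_{s,t}-\mathbf{X}^{m,2,ij}_{s,t}\bigr|^2\right]
\]
can be written as a $2D$ Young integral against the covariance of the difference process $X^n - X^m$, and by the classical $2D$ Young estimate (valid since $\rho<2$ gives $1/\rho + 1/\rho > 1$) is bounded by $C\,V_\rho(R_{X^n,X^m};[s,t]^2)^2$. Combined with the inequality $V_\rho(R_{X^n};[s,t]^2) \leq C\,V_\rho(R_X;[s,t]^2)$ (a consequence of the fact that the covariance of a piecewise linear interpolation can be controlled by that of the original process), and using Lemma~\ref{lemma:exist_control} to produce a control $\omega$ with $V_\rho(R_X;[u,v]^2) \leq \omega(u,v)^{1/\rho}$, one obtains second-level $L^2$ increments of order $\omega(s,t)^{1/\rho}$. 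The diagonal terms $i=j$ reduce to polynomials of $X$ and are handled directly. The difference terms for $n\to\infty$ go to zero uniformly in $s,t$ in a controlled manner (pick up an extra small factor by interpolating between the uniform bound and the $\rho$-variation bound, cf.\ Lemma~\ref{lemma:infinterpolation}).

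To pass from $L^2$ bounds on a fixed Wiener chaos to the almost sure $p$-variation convergence, I would invoke Gaussian hypercontractivity: since each component of $\mathbf{X}^{n,k}$ lies in the $k$-th homogeneous Wiener chaos, $L^q$ and $L^2$ norms are equivalent up to a constant depending only on $k,q$. This upgrades the increment bounds to all moments, whence a rough-path Kolmogorov criterion (as in \cite{FV10}) delivers $\varrho_{p\text{-var}}(\mathbf{X}^n,\mathbf{X}^m) \to 0$ in $L^q$ for every $q$, and along a subsequence almost surely. Higher levels up to $\lfloor p\rfloor$ are obtained either by the same $2D$ Young route at each level or by invoking Lyons' extension theorem once levels $1$ and $2$ are secured (for $p<3$ only levels one and two are needed, and the general case $p<4$ allows a single additional hand-computed level, as discussed in the introduction). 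The main obstacle is the second level estimate, i.e.\ the $2D$ Young bound and the control of $V_\rho(R_{X^n})$ by $V_\rho(R_X)$ uniformly in $n$; the rest is a Kolmogorov-type compactness argument once those ingredients are in place.
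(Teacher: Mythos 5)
The paper itself does not prove this theorem; it states it and refers the reader to Friz--Victoir (\cite{FV10G} and \cite[Chapter 15]{FV10}). Your sketch correctly reproduces that reference's strategy---piecewise linear approximations, 2D Young estimates at level two exploiting componentwise independence, Gaussian hypercontractivity to upgrade $L^2$ to $L^q$ bounds on each fixed Wiener chaos, and a rough-path Kolmogorov criterion to obtain convergence in $\varrho_{p\text{-var}}$ (with level three handled by hand when $p \geq 3$)---so this is the same approach as the cited proof.
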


\bigskip

In the next proposition, we give an upper $L^2$-estimate for the difference of two Gaussian rough paths on the first two levels.

\bigskip

\begin{prop}\label{prop_estimates_first_levels}
	Let $(X,Y) = (X^{1},Y^{1}, \cdots, X^{d}, Y^{d}): [0,T] \rightarrow \R^{d+d}$ be a centered Gaussian process with continuous sample paths where $(X^i,Y^i)$ and $(X^j,Y^j)$ are independent for $i \neq j$. Let $\rho \in [1,\frac{3}{2})$ and assume that $V_{\rho'}(R_{(X,Y)}, [0,T]^{2}) \leq K < +\infty$ for a constant $K>0$ where $\rho'<\rho$ in the case $\rho>1$ and $\rho'=1$ in the case $\rho=1$. Let $\gamma \geq \rho$ such that $\frac{1}{\gamma} + \frac{1}{\rho} > 1$. Then there are constants $C_0,C_1,C_2$ dependending on $\rho,\rho',\gamma$ and $K$ and a control $\omega$ such that $\omega(0,T) \leq C_0$ and 
	\begin{align*}
		\left| X_{s,t} - Y_{s,t} \right|_{L^2} \leq C_1 \sup_{u\in[s,t]} \left| X_u - Y_u \right|_{L^2}^{1-\frac{\rho}{\gamma}} \omega(s,t)^{\frac{1}{2\gamma}}
	\end{align*}
	and
	\begin{align*}
	\left| \int_s^t X_{s,u} \,\otimes dX_u - \int_s^t Y_{s,u} \,\otimes dY_u\right|_{L^2} 
	\leq C_2 \sup_{u\in[s,t]} \left| X_u - Y_u \right|_{L^2}^{1-\frac{\rho}{\gamma}} \omega(s,t)^{\frac{1}{2\gamma} + \frac{1}{2\rho}}
	\end{align*}
	hold for every $s<t$.
\end{prop}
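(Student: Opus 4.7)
My plan for the first bound starts from the Gaussian identity
\[
|X^i_{s,t}-Y^i_{s,t}|_{L^2}^{2}=R_{X^i-Y^i}\left(\begin{array}{c}s,t\\ s,t\end{array}\right),
\]
and bounds this rectangular increment by $V_\gamma(R_{X^i-Y^i};[s,t]^2)$ using the trivial partition $\{s,t\}$. I then invoke Lemma~\ref{lemma:infinterpolation} with $p=\rho$, $p'=\gamma$ (permissible since $\gamma\ge\rho$) to split this into a $V_\infty$ factor and a $V_\rho$ factor. Cauchy--Schwarz applied to the rectangular increment gives $V_\infty(R_{X^i-Y^i};[s,t]^2)\le 4\sup_{u\in[s,t]}|X^i_u-Y^i_u|_{L^2}^{2}$. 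Since $R_{X^i-Y^i}$ is a linear combination of entries of $R_{(X,Y)}$, its $\rho'$-variation is bounded by a constant multiple of $K$, so Lemma~\ref{lemma:exist_control} supplies a single control $\omega$ with $\omega(0,T)\le C_0$ satisfying $V_\rho(R_{X^i-Y^i};J^2)\le\omega(J)^{1/\rho}$. Substituting the interpolated bounds and taking square roots yields the first claim after summing over components.

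For the second bound I split component-wise. With $D=X-Y$ and indices $i\ne j$, I write
\[
\int_s^t X^i_{s,u}\,dX^j_u-\int_s^t Y^i_{s,u}\,dY^j_u \;=\; \int_s^t X^i_{s,u}\,dD^j_u \;+\; \int_s^t D^i_{s,u}\,dY^j_u,
\]
so that $(a+b)^2\le 2a^2+2b^2$ reduces the task to estimating the $L^2$-norm of each summand. By independence of $(X^i,Y^i)$ and $(X^j,Y^j)$ together with Fubini (applied to the Riemann-sum approximations underlying the Friz--Victoir lift), these $L^2$-norms square to the two-dimensional Young--Stieltjes integrals $\iint R_{X^i}\,dR_{D^j}$ and $\iint R_{D^i}\,dR_{Y^j}$ over $[s,t]^2$. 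The 2D Young--Stieltjes bound $|\iint f\,dg|\le C\,V_\rho(f)V_\gamma(g)$ applies since $\tfrac1\rho+\tfrac1\gamma>1$: the integrand factor is controlled by $\omega(s,t)^{1/\rho}$ through Lemma~\ref{lemma:exist_control}, while the integrator factor receives the same interpolation treatment as in step one, giving at most $C\sup_u|X_u-Y_u|_{L^2}^{2(1-\rho/\gamma)}\,\omega(s,t)^{1/\gamma}$. Multiplying the two contributions and taking square roots assembles exactly the exponents claimed.

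The remaining diagonal components $i=j$ I handle separately. By the shuffle relation for geometric rough paths, $\int_s^t X^i_{s,u}\,dX^i_u=\tfrac12(X^i_{s,t})^{2}$, so the difference factors as $\tfrac12(X^i_{s,t}-Y^i_{s,t})(X^i_{s,t}+Y^i_{s,t})$. Cauchy--Schwarz in $L^4$ together with the equivalence of $L^{p}$-norms on the first Wiener chaos reduces this to the product of the first-level bound with $|X^i_{s,t}+Y^i_{s,t}|_{L^2}\le 2\,\omega(s,t)^{1/(2\rho)}$, whose exponents match the target. At the end I replace the controls produced along the way by their sum to obtain one common $\omega$, and absorb all dimensional and interpolation constants into $C_1,C_2$.

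The main obstacle I expect is the 2D Young-integral representation of the $L^2$-norms of the iterated integrals. Making this rigorous requires constructing $\int X^i\,dX^j$ and $\int Y^i\,dY^j$ as $L^2$-limits of Riemann sums (so that Fubini commutes with the limit), and verifying that the double integral $\iint R_{X^i}\,dR_{D^j}$ is a well-defined Young--Stieltjes integral; this is where the strict inequality $\tfrac1\rho+\tfrac1\gamma>1$ is essential. The strict gap $\rho'<\rho$ (when $\rho>1$) in the hypothesis is equally essential because Lemma~\ref{lemma:exist_control} needs it to produce the control $\omega$.
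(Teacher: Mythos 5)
Your proposal follows essentially the same route as the paper: the first level via the identity $|X^i_{s,t}-Y^i_{s,t}|_{L^2}^2 = R_{X^i-Y^i}(s,t;s,t)$, the interpolation Lemma~\ref{lemma:infinterpolation} combined with Cauchy--Schwarz to extract the $\sup_u|X_u-Y_u|_{L^2}^{1-\rho/\gamma}$ factor, the control from Lemma~\ref{lemma:exist_control}, the shuffle identity with Wiener-chaos norm equivalence on the diagonal, and the independence/2D-Young argument off the diagonal.

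There is, however, a slip in your off-diagonal treatment. Your decomposition is $\int X^i_{s,u}\,dD^j_u + \int D^i_{s,u}\,dY^j_u$, which squares (by independence and Fubini) to $\iint R_{X^i}\,dR_{D^j}$ and $\iint R_{D^i}\,dR_{Y^j}$. You then apply a single template ``$|\iint f\,dg|\le C\,V_\rho(f)\,V_\gamma(g)$, interpolating the integrator,'' claiming the integrator gives $C\sup_u|X_u-Y_u|_{L^2}^{2(1-\rho/\gamma)}\,\omega(s,t)^{1/\gamma}$. For the first term the integrator is $R_{D^j}$, so this is fine. But for the second term the integrator is $R_{Y^j}$: interpolating there gives $V_\infty(R_{Y^j})^{1-\rho/\gamma}\omega^{1/\gamma}$, where $V_\infty(R_{Y^j})$ is controlled by $\sup_u|Y_u|_{L^2}^2$, \emph{not} by $\sup_u|X_u-Y_u|_{L^2}^2$, and the difference factor $R_{D^i}$ sits in the integrand, where your template assigns $V_\rho(R_{D^i})\leq\omega^{1/\rho}$ --- again no smallness. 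So as written, the second cross term produces no $\epsilon$-factor at all. The fix is immediate and is what the paper does: put the $\gamma$-variation exponent (and the interpolation) on whichever factor involves $X-Y$ and the $\rho$-variation exponent on the other; the 2D Young inequality is symmetric in which of the two factors carries which exponent, provided $1/\rho + 1/\gamma > 1$. In particular, for the second integral one should bound it by $C\,V_\gamma(R_{D^i};[s,t]^2)\,V_\rho(R_{Y^j};[s,t]^2)$ and interpolate the first factor. With that correction your argument agrees with the paper's.

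Minor note: the paper chooses the decomposition $\int(X^i-Y^i)_{s,u}\,dX^j_u + \int Y^i_{s,u}\,d(X^j-Y^j)_u$, so that the difference factor appears consistently on one side of each Young pairing; your decomposition is algebraically equivalent but has the difference factor switching sides, which is precisely what tripped up the uniform template. Otherwise the two proofs are the same.
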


\begin{proof}
	Note first that, by assumption on $V_{\rho'}(R_{(X,Y)};[0,T]^2)$, Lemma \ref{lemma:exist_control} guarantees that there is a control $\omega$ and a constant $c_1=c_1(\rho,\rho')$ such that
	\begin{align*}
		V_{\rho}(R_{X};[s,t]^2) \vee V_{\rho}(R_{Y};[s,t]^2) \vee V_{\rho}(R_{(X-Y)};[s,t]^2) \leq \omega(s,t)^{1/{\rho}}  
	\end{align*}
	holds for all $s<t$ and $i=1,\ldots,d$ with the property that $\omega(0,T) \leq c_1 K^{\rho} =:C_0$. We will estimate both levels componentwise. We start with the first level. Let $i\in\{1,\ldots,d\}$. Then,
	\begin{align*}
		\left| X^i_{s,t} - Y^i_{s,t} \right|_{L^2}^2 &= \left| R_{(X^i - Y^i)}\left( \begin{array}{c} s,t \\ s,t \end{array} \right) \right| \\
		&\leq V_{\gamma}(R_{(X - Y)};[s,t]^2) \\
	\end{align*}
	and thus
	\begin{align*}
		\left| X_{s,t} - Y_{s,t} \right|_{L^2} \leq c_2 \sqrt{V_{\gamma}(R_{(X - Y)};[s,t]^2)}.
	\end{align*}
	For the second level, consider first the case $i=j$. We have, using that $(X,Y)$ is Gaussian and that we are dealing with \emph{geometric} rough paths,
	\begin{align*}
		\left| \int_s^t X^i_{s,u} \,dX^i_u - \int_s^t Y^i_{s,u} \,dY^i_u\right|_{L^2}
		&= \frac{1}{2} \left| (X_{s,t}^i)^2 - (Y_{s,t}^i)^2 \right|_{L^2} \\
		&= \frac{1}{2} \left| (X_{s,t}^i - Y_{s,t}^i)(X_{s,t}^i + Y_{s,t}^i) \right|_{L^2} \\
		&\leq c_3 \left| X_{s,t}^i - Y_{s,t}^i \right|_{L^2} \left(|X_{s,t}^i|_{L^2} + |Y_{s,t}^i|_{L^2} \right).
	\end{align*}
	From the first part, we know that
	\begin{align*}
		\left| X_{s,t}^i - Y_{s,t}^i \right|_{L^2} \leq \sqrt{V_{\gamma}(R_{(X - Y)};[s,t]^2)}.
	\end{align*}
	Furthermore,
	\begin{align*}
		|X_{s,t}^i|_{L^2}	= \sqrt{\left| R_{X}\left( \begin{array}{c} s,t \\ s,t \end{array} \right) \right|}
		\leq \sqrt{V_{\rho}(R_{X};[s,t]^2)} \leq \omega(s,t)^{\frac{1}{2\rho}}
	\end{align*}
	and the same holds for $|Y_{s,t}^i|_{L^2}$. Hence
	\begin{align*}
		\left| \int_s^t X^i_{s,u} \,dX^i_u - \int_s^t Y^i_{s,u} \,dY^i_u\right|_{L^2} \leq c_4 \sqrt{V_{\gamma}(R_{(X - Y)};[s,t]^2)}\omega(s,t)^{\frac{1}{2\rho}}.
	\end{align*}
	For $i\neq j$,
	\begin{align*}
		&\left| \int_s^t X^i_{s,u} \,dX^j_u - \int_s^t Y^i_{s,u} \,dY^j_u\right|_{L^2} \\
		\leq &\left| \int_s^t (X^i - Y^i)_{s,u} \,dX^j_u \right|_{L^2}
		+ \left| \int_s^t Y^i_{s,u} \,d(X^j - Y^j)_u \right|_{L^2}.
	\end{align*}
	We estimate the first term. From independence,
	\begin{align*}
		E \left[\left(\int_s^t (X^i - Y^i)_{s,u} \,dX^j_u\right)^2 \right]
		= \int_{[s,t]^2} R_{(X^i - Y^i)}\left( \begin{array}{c} s,u \\ s,v \end{array} \right) \, dR_{X^j}(u,v)
	\end{align*}
	where the integral on the right is a $2D$ Young integral.\footnote{The reader might feel a bit uncomfortable at this point asking why it is allowed to put expectation inside the integral (which is not even an integral in Riemann-Stieltjes sense). However, this can be made rigorous by dealing with processes which have sample paths of bounded variation first and passing to the limit afterwards (cf. \cite{FV10G, FV10, FR11b, FH11}). We decided not to go too much into detail here in order not to distract the reader from the main ideas and to improve the readability.}
	By a $2D$ Young estimate (cf. \cite{T02}),
	\begin{align*}
		\left| \int_{[s,t]^2} R_{(X^i - Y^i)}\left( \begin{array}{c} s,u \\ s,v \end{array} \right) \, dR_{X^j}(u,v) \right|
		&\leq c_5(\rho,\gamma) V_{\gamma}(R_{(X^i - Y^i)};[s,t]^2) V_{\rho}(R_{X^j};[s,t]^2) \\
		&\leq c_6 V_{\gamma}(R_{(X - Y)};[s,t]^2) \omega(s,t)^{1/ \rho}.
	\end{align*}
	The second term is treated exactly in the same way.
	Summarizing, we have shown that
	\begin{align*}
		\left| \int_s^t X_{s,u} \,\otimes dX_u - \int_s^t Y_{s,u} \,\otimes dY_u\right|_{L^2} 
		\leq C \sqrt{V_{\gamma}(R_{(X - Y)};[s,t]^2)} \omega(s,t)^{\frac{1}{2 \rho}}.
	\end{align*}
	Finally, by Lemma \ref{lemma:infinterpolation}
	\begin{align*}
		V_{\gamma}(R_{(X - Y)};[s,t]^2) \leq V_{\infty}(R_{(X - Y)};[s,t]^2)^{1-\rho / \gamma} \omega(s,t)^{1/ \gamma}
	\end{align*}
	and by the Cauchy-Schwarz inequality
	\begin{align*}
		V_{\infty}(R_{(X - Y)};[s,t]^2) \leq 4 \sup_{u\in [s,t]} |X_u - Y_u|_{L^2}^2
	\end{align*}
	which gives the claim.
\end{proof}

\bigskip

\begin{cor} \label{first two levels deterministic estimates}
	Under the assumptions of Proposition \ref{prop_estimates_first_levels}, for every $\gamma$ satisfying $\gamma \geq \rho$ and $\frac{1}{\gamma} + \frac{1}{\rho} > 1$, and every $p >  2\rho$ and $\gamma' > \gamma$, 
	there is a (random) control $\hat{\omega}$ such that
	\begin{align}
		|\X_{s,t}^n| &\leq \hat{\omega}(s,t)^{n/p}\label{eqn_aspvarboundX}\\
		|\Y_{s,t}^n| &\leq \hat{\omega}(s,t)^{n/p}\label{eqn_aspvarboundY}\\
		|\X_{s,t}^n - \Y_{s,t}^n| &\leq \epsilon \hat{\omega}(s,t)^{\frac{1}{2\gamma'} + \frac{n-1}{p}}\label{eqn_aspvarboundX-Y}
	\end{align}
	holds a.s. for all $s<t$ and $n=1,2$ where $\epsilon =  \sup_{u\in[0,T]} \left| X_u - Y_u \right|_{L^2}^{1-\frac{\rho}{\gamma}}$. Furthermore, there is a constant $C=C(p,\rho,\gamma,\gamma',K)$ such that
	\begin{align*}
		\left|\hat{\omega}(0,T) \right|_{L^q} \leq C T (q^{p/2} + q^{\gamma'})
	\end{align*}
	holds for all $q\geq 1$.
\end{cor}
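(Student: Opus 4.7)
The plan is to upgrade the $L^2$-increment estimates of Proposition~\ref{prop_estimates_first_levels} to pathwise bounds via Gaussian hypercontractivity plus a Garsia--Rodemich--Rumsey / Besov-variation argument, and then to package everything into a single random control $\hat\omega$. The individual ingredients are standard (cf.\ \cite{FV10G}, \cite[Ch.~15]{FV10}); the content of the corollary is that one control can be chosen to handle $\X$, $\Y$ and their difference simultaneously, with the inhomogeneous exponents $n/p$ and $1/(2\gamma')+(n-1)/p$ as stated.

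\emph{Step 1 (hypercontractivity).} Each of the random vectors $\X^n_{s,t}$, $\Y^n_{s,t}$ and $\X^n_{s,t}-\Y^n_{s,t}$ (for $n=1,2$) lies in the inhomogeneous Wiener chaos of order at most $n$ generated by $(X,Y)$, so its $L^q$-norm is dominated by $C(q-1)^{n/2}$ times its $L^2$-norm for $q\geq 2$. Combined with Proposition~\ref{prop_estimates_first_levels} and the self-estimate $|\X^n_{s,t}|_{L^2}\vee|\Y^n_{s,t}|_{L^2}\leq C\omega(s,t)^{n/(2\rho)}$, which drops out of the same $2$D Young arguments used in its proof, this gives for all $q\geq 2$
\begin{align*}
|\X^n_{s,t}|_{L^q}\vee|\Y^n_{s,t}|_{L^q} &\leq C\,q^{n/2}\,\omega(s,t)^{n/(2\rho)},\\
|\X^n_{s,t}-\Y^n_{s,t}|_{L^q} &\leq C\,q^{n/2}\,\epsilon\,\omega(s,t)^{\frac{1}{2\gamma}+\frac{n-1}{2\rho}}.
\end{align*}

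\emph{Step 2 (pathwise bounds).} Set target exponents $\alpha^{XY}_n:=n/p$ and $\alpha^D_n:=\frac{1}{2\gamma'}+\frac{n-1}{p}$; the assumptions $p>2\rho$ and $\gamma'>\gamma$ make each strictly less than the corresponding Step~1 exponent. A standard GRR / Besov-variation argument (cf.\ \cite[Ch.~15 and App.~A]{FV10}) then produces, for each of $\X^n,\Y^n,\epsilon^{-1}(\X^n-\Y^n)$ and $n=1,2$, a single nonnegative random variable $\xi^X_n,\xi^Y_n,\xi^D_n$ respectively, such that $|\X^n_{s,t}|\leq\xi^X_n\omega(s,t)^{\alpha^{XY}_n}$, $|\Y^n_{s,t}|\leq\xi^Y_n\omega(s,t)^{\alpha^{XY}_n}$ and $|\X^n_{s,t}-\Y^n_{s,t}|\leq\epsilon\,\xi^D_n\omega(s,t)^{\alpha^D_n}$ hold a.s.\ for every $s<t$, with moments $|\xi^X_n|_{L^q}\vee|\xi^Y_n|_{L^q}\vee|\xi^D_n|_{L^q}\leq C\,q^{n/2}$ (the bounded factor $\omega(0,T)^{(\text{exponent gap})}$ is absorbed into $C$, using $\omega(0,T)\leq C_0$).

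\emph{Step 3 (assembly and $L^q$-bound).} Put
\begin{align*}
\hat\omega(s,t) := C\,\omega(s,t)\Big(1 + \sum_{n=1,2}\bigl((\xi^X_n)^{p/n} + (\xi^Y_n)^{p/n} + (\xi^D_n)^{1/\alpha^D_n}\bigr)\Big).
\end{align*}
The bracketed factor is a single positive random variable independent of $(s,t)$, so $\hat\omega$ remains a control and the three pointwise bounds \eqref{eqn_aspvarboundX}--\eqref{eqn_aspvarboundX-Y} hold by construction. Minkowski plus Step~2 gives $|(\xi^X_n)^{p/n}|_{L^q}\vee|(\xi^Y_n)^{p/n}|_{L^q}\leq C\,q^{p/2}$ (via $|\xi^X_n|^{p/n}_{L^q}=|\xi^X_n|_{L^{qp/n}}^{p/n}\leq C(qp/n)^{p/2}$) and $|(\xi^D_1)^{2\gamma'}|_{L^q}\leq C\,q^{\gamma'}$; the $n=2$ difference exponent $1/\alpha^D_2 = 2\gamma'p/(2\gamma'+p)$ is always $\leq\max(p/2,\gamma')$ and is therefore absorbed into $q^{p/2}+q^{\gamma'}$. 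The real obstacle is Step~2 -- the careful constant-tracking in the GRR / Besov embedding that ensures each $\xi$ has the right Gaussian-type moments; once that is granted, Step~3 is essentially formal bookkeeping.
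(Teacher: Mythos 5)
Your proposal is correct and rests on the same two pillars the paper uses -- Gaussian hypercontractivity to move from $L^2$ to $L^q$, then a Kolmogorov/GRR-type argument to upgrade moment estimates to pathwise bounds with moment control on the implied random constants -- but it packages the final control differently, and that difference is worth flagging. The paper first reparametrizes time by $t \mapsto \omega(0,t)/\omega(0,T)$ so that the deterministic control $\omega$ becomes comparable to $|t-s|$, then applies the \emph{H\"older-type} Kolmogorov lemma for multiplicative functionals (Lemma~\ref{lemma_kolmogorov}, a mild variant of \cite[Thm.~A.13]{FV10}), and finally defines $\hat\omega$ as the sum of the genuine $p$-variation functionals $\hat\omega^n_X$, $\hat\omega^n_Y$, $\hat\omega^n_{X-Y}$ of the (re-parametrization-invariant) paths; the explicit factor $T$ in the $L^q$-bound is an artifact of this H\"older-to-$p$-variation comparison. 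You instead assert a GRR/Besov estimate phrased directly against $\omega(s,t)$, producing random constants $\xi^X_n,\xi^Y_n,\xi^D_n$, and then set $\hat\omega := C\,\omega\cdot(\text{random prefactor})$ -- a rescaling of the deterministic control, which is tidier and immediately superadditive. The one place you are quietly overclaiming is Step~2: the Kolmogorov-for-multiplicative-functionals machinery in the standard references (and in this paper's appendix) is stated for the H\"older modulus $|t-s|$, not for a general control $\omega(s,t)$; to run it in the form you invoke you either reparametrize exactly as the paper does, or you must redo the dyadic chaining with $\omega$-equidistributed partitions. Making that explicit would close the only real gap. Your Step~3 bookkeeping is fine, including the observation that $1/\alpha^D_2 = 2\gamma' p/(2\gamma'+p)\leq\max(p/2,\gamma')$ so that both exponents land inside $q^{p/2}+q^{\gamma'}$.
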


\begin{proof}
	Let $\omega$ be the control from Proposition \ref{prop_estimates_first_levels}. We know that
	\begin{align*}
		|\X^n_{s,t} - \Y^n_{s,t}|_{L^2} &\leq c_1 \epsilon \omega(s,t)^{\frac{1}{2\gamma} + \frac{n-1}{2\rho}}
	\end{align*}
	holds for a constant $c_1$ for all $s<t$ and $n=1,2$. Furthermore, $|\X^n_{s,t}|_{L^2} \leq c_2 \omega(s,t)^{\frac{n}{2\rho}}$ for a constant $c_2$ for all $s<t$ and $n=1,2$ and the same holds for $\Y$ (this just follows from setting $Y=\text{const.}$ and $\gamma = \rho$ in Proposition \ref{prop_estimates_first_levels}). Now introduce a new process $\tilde{\X}: [0,T] \rightarrow \R^{d}$ on the same sample space as $\X$ such that for all sample points, we have
	\begin{align*}
	\tilde{\X}_{\omega(0,t) / \omega(0,T)} = \X_{t}, \qquad \forall t \in [0,T], 
	\end{align*}
and define $\tilde{\Y}$ in the same way. Then $\tilde{\X}, \tilde{\Y}$ are well defined, multiplicative, and we can replace the control $\omega$ by $c_{3} K |t-s|$ for the two re-parametrized processes. Using that $X,Y$ are Gaussian, we may pass from $L^2$ to $L^q$ estimates and we know that $O(|\X^n|_{L^q}) = q^{n/2}$ (same for $\Y$ and $\X - \Y$, cf. \cite[Appendix A]{FV10}). Hence
	\begin{align}
		|\tilde{\X}^n_{s,t}|_{L^q} &\leq c_4 (\sqrt{q K^{1/\rho}})^n |t-s|^{\frac{n}{2\rho}}\label{eqn_lqhoelderboundX}\\
		|\tilde{\Y}^n_{s,t}|_{L^q} &\leq c_4 (\sqrt{q K^{1/\rho}})^n |t-s|^{\frac{n}{2\rho}}\label{eqn_lqhoelderboundY}\\
		|\tilde{\X}^n_{s,t} - \tilde{\Y}^n_{s,t}|_{L^q} &\leq \tilde{\epsilon} c_4 (\sqrt{q K^{1/\rho}})^n |t-s|^{\frac{1}{2\gamma} + \frac{(n-1)}{2\rho}}\label{eqn_lqhoelderboundX-Y}
	\end{align}
	hold for all $s<t$, $n=1,2$ and $q\geq 1$ with $\tilde{\epsilon} = \epsilon K^{\frac{1}{2\gamma} - \frac{1}{2\rho}}$. Using Lemma \ref{lemma_kolmogorov} in the appendix, we see that there is a constant $c_5 = c_5(p,\rho,\gamma,\gamma',K)$ such that
	\begin{align}
		\left| \sup_{s<t \in [0,T]} \frac{|\tilde{\X}^n_{s,t}|}{|t-s|^{n/p}} \right|_{L^q} &\leq c_5 q^{n/2}\\
		\left| \sup_{s<t \in [0,T]} \frac{|\tilde{\Y}^n_{s,t}|}{|t-s|^{n/p}} \right|_{L^q} &\leq c_5 q^{n/2}\\
		\left| \sup_{s<t \in [0,T]} \frac{|\tilde{\X}^n_{s,t} - \tilde{\Y}^n_{s,t}|}{|t-s|^{1/p(n)}} \right|_{L^q} &\leq \epsilon c_5 q^{n/2}
	\end{align}
	hold for $q$ sufficiently large and $n=1,2$ where $\frac{1}{p(n)} = \frac{1}{2\gamma'} + \frac{n-1}{p}$.
	Set
	\begin{align*}
		\hat{\omega}_X^n(s,t) &:= \sup_{D\subset [s,t]} \sum_{t_i \in D} |\X^n_{t_i,t_{i+1}}|^{p/n} \\
		\hat{\omega}_Y^n(s,t) &:= \sup_{D\subset [s,t]} \sum_{t_i \in D} |\Y^n_{t_i,t_{i+1}}|^{p/n} \\
		\hat{\omega}_{X-Y}^n(s,t) &:=  \sup_{D\subset [s,t]} \sum_{t_i \in D} |\X^n_{t_i,t_{i+1}} - \Y^n_{t_i,t_{i+1}}|^{p(n)}
	\end{align*}
	and
	\begin{align*}
		\hat{\omega}(s,t) := \sum_{n=1,2} \hat{\omega}_X^n(s,t) + \hat{\omega}_Y^n(s,t) + \epsilon^{\frac{1}{p(n)}} \hat{\omega}_{X-Y}^n(s,t).
	\end{align*}
	for $s<t$. Clearly, $\hat{\omega}$ fulfils \eqref{eqn_aspvarboundX}, \eqref{eqn_aspvarboundY} and \eqref{eqn_aspvarboundX-Y}. Moreover, the notion of $p$-variation is invariant under reparametrization, hence
	\begin{align*}
		\hat{\omega}_{X}^n(0,T) = \sup_{D\subset [0,T]} \sum_{t_i \in D} |\X^n_{t_i,t_{i+1}}|^{p/n} = \sup_{D\subset [0,T]} \sum_{t_i \in D} |\tilde{\X}^n_{t_i,t_{i+1}}|^{p/n} \leq T \sup_{s<t \in [0,T]} \frac{|\tilde{\X}^n_{s,t}|^{p/n}}{|t-s|}
	\end{align*}
	and a similar estimate holds for $\hat{\omega}_Y^n(0,T)$ and $\hat{\omega}_{X-Y}^n(0,T)$. By the triangle inequality and the estimates \eqref{eqn_lqhoelderboundX}, \eqref{eqn_lqhoelderboundY} and \eqref{eqn_lqhoelderboundX-Y},
	\begin{align*}
		\left| \hat{\omega}(0,T)\right|_{L^q} &\leq \sum_{n=1,2} |\hat{\omega}_X^n(0,T)|_{L^q} + |\hat{\omega}_Y^n(0,T)|_{L^q} + \epsilon^{\frac{1}{p(n)}} |\hat{\omega}_{X-Y}^n(0,T)|_{L^q}\\
		&\leq c_6 T \left( q^{p/2} + q^{\frac{p(1)}{2}} + q^{p(2)} \right) \leq c_7 T (q^{p/2} + q^{\gamma'})
	\end{align*}
	for $q$ large enough. We can extend the estimate to all $q\geq 1$ by making the constant larger if necessary.
	\end{proof}

\bigskip

\begin{cor}
Let $\hat{\omega}$ be the random control defined in the previous corollary. Then, for every $n$, there exists a constant $c_n$ such that
\begin{align*}
|\X_{s,t}^{n}| < c_{n} \hat{\omega}(s,t)^{\frac{n}{p}}, \qquad |\Y_{s,t}^{n}| < c_{n} \hat{\omega}(s,t)^{\frac{n}{p}}
\end{align*}
a.s. for all $s<t$. The constants $c_n$ are deterministic and can be chosen such that $c_{n} \leq \frac{2^{n}}{(n / p)!}$, where $x!:= \Gamma(x - 1)$.
\end{cor}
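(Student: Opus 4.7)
The plan is to reduce the claim to Lyons' extension theorem. Since $\rho \in [1, 3/2)$, we may choose $p \in (2\rho, 3)$, so $\lfloor p \rfloor = 2$. This means the first two levels of $\X$ already determine the entire rough path, and the rest is just a (standard) invocation of the extension theorem---no analysis specific to the Gaussian setting is needed at this stage.

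First I would verify that $\hat{\omega}$ is a genuine control in the sense of the definition given in Section~2 and that the previous corollary's estimates already yield $|\X^n_{s,t}|, |\Y^n_{s,t}| \leq \hat{\omega}(s,t)^{n/p}$ for $n=1,2$. Each of the summands in the definition of $\hat{\omega}$, namely $\hat{\omega}_X^n$, $\hat{\omega}_Y^n$, and $\epsilon^{1/p(n)}\hat{\omega}_{X-Y}^n$, is a supremum of $(p/n)$-power-variation sums over dissections, hence continuous, superadditive, and vanishing on the diagonal; a finite sum of controls is again a control. By construction
\[
\hat{\omega}(s,t) \geq \hat{\omega}_X^n(s,t) \geq |\X^n_{s,t}|^{p/n},
\]
and the analogous inequality for $\Y$ holds, so the bounds for $n=1,2$ follow with $c_1=c_2=1$, which is within the stated factorial envelope.

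Next I would apply Lyons' extension theorem (Theorem~2.2.1 of [L98]) pathwise to the multiplicative functional $(1,\X^1,\X^2) \in T^2(\R^d)$. The theorem produces, for every $N \geq 2$, a unique multiplicative extension to $T^N(\R^d)$ whose higher levels satisfy a factorial bound of the form
\[
|\X^n_{s,t}| \leq \frac{\hat{\omega}(s,t)^{n/p}}{\beta (n/p)!}
\]
for a universal constant $\beta$. Since the Friz--Victoir lift of $\X$ is itself a multiplicative functional of finite $p$-variation (controlled by $\hat{\omega}$) whose first two levels match, uniqueness in the extension theorem forces it to coincide with Lyons' extension on all levels. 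This gives the claim with $c_n = 1/(\beta (n/p)!)$, which is bounded by $2^n/(n/p)!$ after absorbing $1/\beta$ into the exponential factor. The identical argument applies to $\Y$.

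The only real ``obstacle'' is the identification of the Friz--Victoir lift with Lyons' unique extension; this is entirely standard and follows at once from the uniqueness clause in the extension theorem applied on every sample path. The remaining work is the bookkeeping needed to absorb the universal Lyons constant into the claimed form $2^n/(n/p)!$.
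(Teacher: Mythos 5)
Your proof takes essentially the same route as the paper, which simply cites Lyons' extension theorem (Theorem~2.2.1 in \cite{L98} or Theorem~3.7 in \cite{LCL06}); you are filling in the details the paper leaves implicit: verifying that $\hat\omega$ is a control, checking the level-1 and level-2 bounds, invoking the extension theorem pathwise, and using its uniqueness clause to identify the extension with the Friz--Victoir lift. This is correct and matches the paper's intended argument.
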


\begin{proof}
	Follows from the extension theorem, cf. \cite[Theorem 2.2.1]{L98} or \cite[Theorem 3.7]{LCL06}.
\end{proof}

\bigskip

\section{Main estimates}
In what follows, we let $p \in (2 \rho, 3)$. Let $\gamma \geq \rho$ such that $\frac{1}{\gamma} + \frac{1}{\rho} > 1$. We write $\log^{+}x = \max \{x, 0\}$, and set
\begin{align*}
\epsilon = \sup_{u \in [0, T]} |X_{u} - Y_{u}|_{L^{2}}^{1 - \frac{\rho}{\gamma}}
\end{align*}

\bigskip

\subsection{Higher level estimates}

\bigskip

We first introduce some notations. Suppose $\X$ is a multiplicative functional in $T^{N}(\R^{d})$ with finite $p$-variation controlled by $\omega$, $N \geq \lfl p \rfl$. Then, define
\begin{align*}
\hat{\X}_{s,t} = 1 + \sum_{n=1}^{N} \X_{s,t}^{n} \in T^{N+1}(\R^{d}). 
\end{align*}
Then, $\hat{\X}$ is multiplicative in $T^{N}$, but in general not in $T^{N+1}$. For any partition $D = \{s = u_{0} < u_{1} < \cdots < u_{L} < u_{L+1} = t\}$, define
\begin{align*}
\hat{\X}_{s,t}^{D}:= \hat{\X}_{s, u_{1}} \otimes \cdots \otimes \hat{\X}_{u_L, t} \in T^{N+1}(\R^{d}). 
\end{align*}
The following lemma gives a construction of the unique multiplicative extension of $\X$ to higher degrees. It was first proved in Theorem 2.2.1 in \cite{L98}.

\bigskip

\begin{lemma}\label{lemma_essentialexttheorem}
Let $\X$ be a multiplicative functional in $T^{N}$. Let $D = \{s < u_{1} < \cdots < u_L < t\}$ be any partition of $(s,t)$, and $D^{j}$ denote the partition with the point $u_{j}$ removed from $D$. Then, 
\begin{align} \label{removal of a point}
\hat{\X}_{s,t}^{D} - \hat{\X}_{s,t}^{D^{j}} = \sum_{n=1}^{N} \X_{u_{j-1}, u_{j}}^{n} \otimes \X_{u_{j}, u_{j+1}}^{N+1-n} \in T^{N+1}(\R^{d}). 
\end{align}
In particular, its projection onto the subspace $T^{N}$ is the $0$-vector. Suppose further that $\X$ has finite $p$-variation controlled by $\omega$, and $N \geq \lfl p \rfl$, then the limit
\begin{align*}
\lim_{|D| \rightarrow 0} \hat{\X}_{s,t}^{D} \in T^{N+1}(\R^{d})
\end{align*}
exists. Furthermore, it is the unique multiplicative extension of $\X$ to $T^{N+1}$ with finite $p$-variation controlled by $\omega$. 
\end{lemma}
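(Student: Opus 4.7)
The plan is to establish the identity first by a direct unfolding of tensor products, and then run Lyons' standard ``dropping points'' procedure to produce the limit and verify uniqueness.

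For the identity \eqref{removal of a point}, I would compare $\hat{\X}_{s,t}^{D}$ and $\hat{\X}_{s,t}^{D^{j}}$ term by term. The only difference is that the consecutive factor $\hat{\X}_{u_{j-1},u_{j}} \otimes \hat{\X}_{u_{j},u_{j+1}}$ in $\hat{\X}_{s,t}^{D}$ is replaced by $\hat{\X}_{u_{j-1},u_{j+1}}$ in $\hat{\X}_{s,t}^{D^{j}}$. Expanding the tensor product degree-by-degree, the degree-$k$ component of $\hat{\X}_{u_{j-1},u_{j}} \otimes \hat{\X}_{u_{j},u_{j+1}}$ equals $\sum_{n=0}^{k} \X^{n}_{u_{j-1},u_{j}} \otimes \X^{k-n}_{u_{j},u_{j+1}}$ for $k\leq N$ and equals $\sum_{n=1}^{N} \X^{n}_{u_{j-1},u_{j}} \otimes \X^{N+1-n}_{u_{j},u_{j+1}}$ for $k=N+1$ (since $\hat{\X}$ has no component above degree $N$). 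For $k\leq N$ the first expression coincides with $\X^{k}_{u_{j-1},u_{j+1}}$ by the multiplicativity of $\X$ in $T^{N}$, so the differences in degrees $\leq N$ cancel. The only surviving term sits in degree $N+1$, proving \eqref{removal of a point} and immediately the projection claim.

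For existence of the limit, I would use the pigeonhole-based telescoping argument. Given any partition $D$ of $[s,t]$ with $L+1$ intermediate intervals, superadditivity of $\omega$ yields an interior index $j$ with $\omega(u_{j-1},u_{j+1}) \leq \frac{2}{L}\omega(s,t)$. Applying \eqref{removal of a point} and the $p$-variation bound $|\X^{n}_{u,v}| \leq C_{n}\omega(u,v)^{n/p}$ to each of the $N$ summands gives
\begin{align*}
\bigl|\hat{\X}_{s,t}^{D} - \hat{\X}_{s,t}^{D^{j}}\bigr|_{N+1} \leq \sum_{n=1}^{N} C_{n}C_{N+1-n}\,\omega(u_{j-1},u_{j+1})^{(N+1)/p} \leq C\,\Bigl(\tfrac{2\omega(s,t)}{L}\Bigr)^{(N+1)/p}.
\end{align*}
Since $N\geq \lfloor p\rfloor$ implies $\theta := (N+1)/p > 1$, iterating the removal of points one at a time yields a telescoping bound dominated by $C\,\omega(s,t)^{\theta}\sum_{L\geq 1} L^{-\theta} < \infty$. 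Hence $\hat{\X}^{D}_{s,t}$ forms a Cauchy net as $|D|\to 0$, and the limit $\X^{N+1}_{s,t}$ exists with $|\X^{N+1}_{s,t} - \X^{n}_{s,t}\otimes \X^{N+1-n}_{s,t}$-type pieces controlled so that $|\X^{N+1}_{s,t}| \leq C_{N+1}\omega(s,t)^{(N+1)/p}$.

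Finally I would verify multiplicativity and uniqueness. Multiplicativity of the extended functional at level $N+1$ follows by taking limits of $\hat{\X}^{D}$ along partitions that contain a fixed midpoint $u$ and using the already-established multiplicativity in lower degrees. For uniqueness, given another multiplicative extension $\tilde{\X}$ with finite $p$-variation controlled by $\omega$, the difference $\Delta_{s,t} := \tilde{\X}^{N+1}_{s,t} - \X^{N+1}_{s,t}$ is additive in the sense $\Delta_{s,t} = \Delta_{s,u} + \Delta_{u,t}$, because the multiplicativity relations in $T^{N+1}$ agree below degree $N+1$. On the other hand $|\Delta_{s,t}| \leq 2C_{N+1}\omega(s,t)^{(N+1)/p}$ with $(N+1)/p > 1$, so applying this bound to a partition and summing via superadditivity of $\omega$ forces $\Delta \equiv 0$.

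The main obstacle I anticipate is bookkeeping in the pigeonhole step: one must check that the error introduced at each removal really does scale like $\omega(u_{j-1},u_{j+1})^{(N+1)/p}$ uniformly in $n$, and that the telescoping across $L$ steps produces a summable series. Everything else is essentially algebraic manipulation of tensor products and a standard ``$p$-variation $<1$ implies additivity $\Rightarrow$ zero'' argument for uniqueness.
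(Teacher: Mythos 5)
The paper does not prove this lemma: it is stated and attributed to Lyons (\cite[Theorem~2.2.1]{L98}), and the proof is delegated to that reference. Your proposal is a reconstruction of Lyons' argument, and most of it is correct: the degree-by-degree cancellation yielding \eqref{removal of a point}, the pigeonhole estimate $\omega(u_{j-1},u_{j+1}) \leq \frac{2}{L}\omega(s,t)$, multiplicativity of the limit via partitions through a fixed midpoint, and uniqueness via additivity of the level-$(N+1)$ difference together with superadditivity of $\omega$ and $(N+1)/p>1$, are all the standard steps and are sound. One minor algebraic point worth spelling out in the identity step: the difference $\hat{\X}^D_{s,t} - \hat{\X}^{D^j}_{s,t}$ is a product $A\otimes\Delta\otimes B$ in $T^{N+1}$, where $\Delta$ is the two-factor difference you computed and $A,B$ collect the remaining factors; since $\Delta$ is concentrated in degree $N+1$ and $\pi_0(A)=\pi_0(B)=1$, the outer factors drop out, which is why the formula is independent of $D\setminus\{u_{j-1},u_j,u_{j+1}\}$.

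The one place where your reasoning has a genuine gap is the passage from ``removal errors are dominated by $C\omega(s,t)^{\theta}\sum_{L\geq 1} L^{-\theta}<\infty$'' to ``hence $\hat{\X}^D_{s,t}$ forms a Cauchy net.'' The iterated pigeonhole gives a \emph{uniform} bound $|\pi_{N+1}(\hat{\X}^D_{s,t})|\leq C\omega(s,t)^{\theta}$ over all $D$, which is boundedness, not Cauchyness. If you try to peel a fine partition $D_2$ down to a coarse $D_1\subset D_2$, you may only remove points of $D_2\setminus D_1$, so at each step you cannot necessarily pick the cheapest point and the $L^{-\theta}$ scaling is not directly available. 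The standard repair is to use the additivity of the level-$(N+1)$ defect $\Gamma^D_{s,t}:=\pi_{N+1}(\hat{\X}^D_{s,t})$ across any point of $D$ (a consequence of the fact that all components of degree $\leq N$ are unchanged, which you already proved), so that for $D_2\supset D_1$ one has $\Gamma^{D_2}_{s,t}-\Gamma^{D_1}_{s,t}=\sum_i \Gamma^{D_2\cap[t'_i,t'_{i+1}]}_{t'_i,t'_{i+1}}$ over the subintervals $[t'_i,t'_{i+1}]$ of $D_1$. Applying the uniform maximal bound on each subinterval gives $\bigl|\Gamma^{D_2}_{s,t}-\Gamma^{D_1}_{s,t}\bigr|\leq C\sum_i \omega(t'_i,t'_{i+1})^{\theta}\leq C\bigl(\max_i\omega(t'_i,t'_{i+1})\bigr)^{\theta-1}\omega(s,t)$, which tends to $0$ as $|D_1|\to 0$ by continuity of $\omega$; passing to common refinements then gives Cauchyness of the full net. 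With that step filled in, your proof matches Lyons' original argument.
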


\bigskip

\begin{theorem} \label{third level estimate}
Let $(X,Y)$ and $\rho,\gamma$ as in Proposition \ref{prop_estimates_first_levels}. Then for every $p>2\rho$ and $\gamma' > \gamma$ there exists a constant $C_{3}$ depending on $p$ and $\gamma'$ and a (random) control $\hat{\omega}$ such that for all $q \geq 1$, we have
\begin{align*}
|\hat{\omega}(0,T)|_{L^{q}} \leq M < +\infty, 
\end{align*}
where $M = M(p, \rho,\gamma,\gamma', K,q)$, and the following holds a.s. for all $[s,t]$:  
\begin{enumerate}
\item If $\frac{1}{2 \gamma'} + \frac{2}{p} > 1$, then
\begin{align*}
|\X_{s,t}^{3} - \Y_{s,t}^{3}| < C_{3} \epsilon \hat{\omega}(s,t)^{\frac{1}{2\gamma'} + \frac{2}{p}}. 
\end{align*}

\item If $\frac{1}{2 \gamma'} + \frac{2}{p} = 1$, then
\begin{align*}
|\X_{s,t}^{3} - \Y_{s,t}^{3}| < C_{3} \epsilon \cdot (1 + \log^{+} \big[\hat{\omega}(0,T) / \epsilon^{1 - \frac{p}{2 \gamma'}} \big] ) \cdot \hat{\omega}(s,t). 
\end{align*}

\item If $\frac{1}{2 \gamma'} + \frac{2}{p} < 1$, then
\begin{align*}
|\X_{s,t}^{3} - \Y_{s,t}^{3}| < C_{3} \epsilon^{\frac{3-p}{1 - p / 2\gamma'}} \hat{\omega}(s,t), 
\end{align*}
\end{enumerate}
\end{theorem}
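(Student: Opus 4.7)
The plan is to adapt Lyons's extension theorem argument (Lemma \ref{lemma_essentialexttheorem}) to the \emph{difference} of two rough paths, exploiting the fact that Corollary \ref{first two levels deterministic estimates} gives \emph{two} bounds on $|\X^n-\Y^n|$ for $n=1,2$: the $\epsilon$-refined one and the trivial one inherited from the a priori estimates on $|\X^n|,|\Y^n|$. Interpolating between these trades a drop in the $\epsilon$-exponent for an increase in the $\hat{\omega}$-exponent, which is what makes the Young/Lyons point-removal converge in all three regimes.

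For a partition $D=\{s=u_0<u_1<\ldots<u_{r+1}=t\}$ of $[s,t]$, put $\Xi^D_{s,t}:=\pi_3(\hat{\X}^D_{s,t}-\hat{\Y}^D_{s,t})$. Then $\Xi^{\{s,t\}}_{s,t}=0$ while $\Xi^D_{s,t}\to \X^3_{s,t}-\Y^3_{s,t}$ as $|D|\to 0$, by Lemma \ref{lemma_essentialexttheorem}. Applying that lemma to $\X$ and $\Y$ separately,
$$
\Xi^D_{s,t} - \Xi^{D^j}_{s,t} = \sum_{n=1}^{2}\bigl[\X^n_{u_{j-1},u_j}\otimes \X^{3-n}_{u_j,u_{j+1}} - \Y^n_{u_{j-1},u_j}\otimes \Y^{3-n}_{u_j,u_{j+1}}\bigr].
$$
Writing each summand via $AB-CD=(A-C)B+C(B-D)$ and interpolating the two bounds $|\X^n_{s,t}-\Y^n_{s,t}|\leq \epsilon\,\hat{\omega}(s,t)^{1/(2\gamma')+(n-1)/p}$ and $|\X^n_{s,t}-\Y^n_{s,t}|\leq 2\hat{\omega}(s,t)^{n/p}$ by a geometric mean with parameter $\theta\in[0,1]$ gives
$$
|\X^n_{s,t}-\Y^n_{s,t}|\leq C\,\epsilon^\theta\,\hat{\omega}(s,t)^{\,n/p+\theta(1/(2\gamma')-1/p)}.
$$
Combining with the a priori bound on the remaining factor and superadditivity of $\hat{\omega}$ yields
$$
|\Xi^D_{s,t}-\Xi^{D^j}_{s,t}|\leq C\,\epsilon^\theta\,\hat{\omega}(u_{j-1},u_{j+1})^{\alpha(\theta)},\qquad \alpha(\theta):=\tfrac{3}{p}+\theta\bigl(\tfrac{1}{2\gamma'}-\tfrac{1}{p}\bigr).
$$
The super-additivity/pigeonhole argument of \cite{L98} then gives, whenever $\alpha(\theta)>1$, $|\Xi^D_{s,t}|\leq C\,\epsilon^\theta\,\hat{\omega}(s,t)^{\alpha(\theta)}\sum_{m\geq 1}m^{-\alpha(\theta)}$; sending $|D|\to 0$ bounds $\X^3_{s,t}-\Y^3_{s,t}$.

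The three cases correspond to tuning $\theta$. In Case 1, $\alpha(1)>1$, so take $\theta=1$ and the series converges directly. In Case 2, $\alpha(1)=1$ and the divergent partial sum $\sum_{m=1}^r 1/m$ is cut off by halting the point-removal once $\hat{\omega}(u_{j-1},u_{j+1})$ drops below the scale $\epsilon^{1-p/(2\gamma')}$, below which the refined bound alone already beats the interpolated one; this produces exactly the $\log^+\bigl(\hat{\omega}(0,T)/\epsilon^{1-p/(2\gamma')}\bigr)$ factor. In Case 3, $\alpha(1)<1$, so I pick $\theta$ just below $\theta_0:=(3-p)/(1-p/(2\gamma'))$ to force $\alpha(\theta)>1$, convert the surplus exponent via $\hat{\omega}(s,t)^{\alpha(\theta)} \leq \hat{\omega}(s,t)\cdot\hat{\omega}(0,T)^{\alpha(\theta)-1}$, and absorb $\hat{\omega}(0,T)^{\alpha(\theta)-1}$ into an enlarged random control (still with all moments, by Corollary \ref{first two levels deterministic estimates}), effectively pushing the $\epsilon$-exponent up to $\theta_0$. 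The main obstacle is precisely this balancing in Cases 2 and 3: identifying the right stopping threshold in Case 2 so that the remaining increments telescope into the stated $\log^+$ argument rather than an uncontrolled $\log r$, and in Case 3 trading off the divergent constant $C(\theta)\sim(\alpha(\theta)-1)^{-1}$ against the growing factor $\hat{\omega}(0,T)^{\alpha(\theta)-1}$ so that the $\epsilon$-exponent reaches $\theta_0$ without spoiling the $L^q$-bound on the enlarged control.
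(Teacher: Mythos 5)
Your overall architecture matches the paper's: expand $\pi_3(\hat{\X}^D_{s,t}-\hat{\Y}^D_{s,t})$ via the point-removal scheme of Lemma \ref{lemma_essentialexttheorem}, bound each removed-point contribution using both the $\epsilon$-refined bound and the a priori $p$-variation bound on levels $1,2$, and sum over the removal stages. The gap is in how the two bounds are combined. You use a geometric mean with a single, stage-independent parameter $\theta$, yielding a total of the form $C(\theta)\,\epsilon^{\theta}\hat{\omega}(s,t)^{\alpha(\theta)}$ with $C(\theta)\sim(\alpha(\theta)-1)^{-1}$. The paper uses, at removal stage $L$, the pointwise minimum $\min\bigl\{\epsilon(\omega(s,t)/L)^{1/(2\gamma')+2/p},\,(\omega(s,t)/L)^{3/p}\bigr\}$, locates the integer threshold $N$ at which the two expressions balance, namely $(\omega(s,t)/N)^{1/p-1/(2\gamma')}\approx\epsilon$, and splits the sum at $L=N$. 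In Case 3 both resulting pieces come out to order $\epsilon^{\theta_0}\omega(s,t)$ with $\theta_0=(3-p)/(1-p/(2\gamma'))$ exactly.

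The distinction is not cosmetic: your Case 3 argument cannot reach $\epsilon^{\theta_0}$. With a fixed $\theta<\theta_0$, absorbing $\hat{\omega}(0,T)^{\alpha(\theta)-1}$ into an enlarged control leaves the $\epsilon$-exponent at $\theta$; the absorption enlarges the control but does not increase the power of $\epsilon$. Even if one lets $\theta$ vary with $(s,t)$ and $\epsilon$ and minimizes $(\alpha(\theta)-1)^{-1}\epsilon^{\theta}\omega^{\alpha(\theta)}$ over $\theta\in(0,\theta_0)$, the optimum is of order $\epsilon^{\theta_0}\log\bigl(\omega(s,t)^{1/p-1/(2\gamma')}/\epsilon\bigr)\,\omega(s,t)$: there is a logarithmic loss, which cannot be hidden in $C_3$ (deterministic and $\epsilon$-independent) nor in $\hat{\omega}$ (whose $L^q$-norm must be bounded uniformly in $\epsilon$). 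This is precisely why the paper works with the minimum and the $\epsilon$-dependent threshold $N$ rather than a geometric interpolant: $\min\{A,B\}\leq A^{\theta}B^{1-\theta}$ with equality only when $A=B$, and the loss accumulates over scales. Your Case 1 ($\theta=1$, no interpolation needed) and your cut-off idea in Case 2 are in substance the min-switch argument, so the gap is confined to Case 3; replacing the fixed interpolant there by the min and the threshold $N$, as in the paper, closes it.
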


\begin{proof}
Let $s < t \in [0,T]$ and let $\hat{\omega}$ be the (random) control defined in Corollary \ref{first two levels deterministic estimates}. Then, by the same corollary, for every $q \geq 1$, $|\hat{\omega}(0, T)|_{L^{q}} \leq M$. Fix an enhanced sample rough path $(\X, \Y)$ up to level $2$ and for simplicity, we will use $\omega$ to denote the corresponding realisation of the (random) control $\hat{\omega}$. We can assume without loss of generality that
\begin{align} \label{assumption on eta}
\epsilon < \omega(s,t)^{\frac{1}{p} - \frac{1}{2\gamma'}}, 
\end{align}
otherwise there will be nothing to prove. Let $D = \{s = u_{0} < \cdots < u_{L+1} = t\}$ be a dissection. Then (cf. \cite[Lemma 2.2.1]{L98}), there exists a $j$ such that
\begin{align} \label{the dropped point}
\omega(u_{j-1}, u_{j+1}) \leq \frac{2}{L}\omega(s,t), \qquad L \geq 1. 
\end{align}
Let $D^{j}$ denote the dissection with the point $u_{j}$ removed from $D$. Then, we have
\begin{align*}
|(\hat{\X}_{s,t}^{D} - \hat{\Y}_{s,t}^{D})^{3}| &< |(\hat{\X}_{s,t}^{D^{j}} - \hat{\Y}_{s,t}^{D^{j}})^{3}| + \sum_{k=1}^{2}(|\textbf{R}_{u_{j-1}, u_{j}}^{k} \otimes \textbf{X}_{u_{j}, u_{j+1}}^{3-k}| \\
&+ |\textbf{X}_{u_{j-1}, u_{j}}^{k} \otimes \textbf{R}_{u_{j}, u_{j+1}}^{3-k}| + |\textbf{R}_{u_{j-1}, u_{j}}^{k} \otimes \textbf{R}_{u_{j}, u_{j+1}}^{3-k}|), 
\end{align*}
where $\textbf{R}_{s,t} = \textbf{Y}_{s,t} - \textbf{X}_{s,t}$. By assumption,
\begin{align} \label{Wiener chaos estimate}
|\textbf{R}_{u_{j-1}, u_{j}}^{k} \otimes \textbf{R}_{u_{j}, u_{j+1}}^{3-k}| < C \cdot \min \bigg\{\epsilon \big(\frac{1}{L} \omega(s,t)\big)^{\frac{1}{2\gamma'} + \frac{2}{p}}, \big(\frac{1}{L}\omega(s,t)\big)^{\frac{3}{p}}  \bigg \}, 
\end{align}
and similar inequalities hold for the other two terms in the bracket. Thus, we have
\begin{align*}
|(\hat{\X}_{s,t}^{D} - \hat{\Y}_{s,t}^{D})^{3}| < |(\hat{\X}_{s,t}^{D^{j}} - \hat{\Y}_{s,t}^{D^{j}})^{3}| + C_{3} \min \bigg\{\epsilon \big(\frac{1}{L} \omega(s,t)\big)^{\frac{1}{2\gamma'} + \frac{2}{p}}, \big(\frac{1}{L}\omega(s,t)\big)^{\frac{3}{p}}  \bigg \}. 
\end{align*}
Let $N$ be the integer that
\begin{align} \label{the unique integer N}
[\frac{1}{N+1} \omega(s,t)]^{\frac{1}{p} - \frac{1}{2 \gamma'}} \leq \epsilon < [\frac{1}{N} \omega(s,t)]^{\frac{1}{p} - \frac{1}{2 \gamma'}}, 
\end{align}
then
\begin{align*}
\epsilon [\frac{1}{L} \omega(s,t)]^{\frac{1}{2\gamma'} + \frac{2}{p}} < [\frac{1}{L}\omega(s,t)]^{\frac{3}{p}}
\end{align*}
if and only if $L \leq N$. By Lemma \ref{lemma_essentialexttheorem}, we have
\begin{align*}
\X_{s,t}^{3} = \lim_{|D| \rightarrow 0} (\hat{\X}_{s,t}^{D})^{3}, \qquad \Y_{s,t}^{3} = \lim_{|D| \rightarrow 0} (\hat{\Y}_{s,t}^{D})^{3}. 
\end{align*}
Thus, for a fixed partition $D$, we choose a point each time according to \eqref{the dropped point}, and drop them successively. By letting $|D| \rightarrow +\infty$, we have
\begin{align*}
|\X_{s,t}^{3} - \Y_{s,t}^{3}| \leq C_{3} \bigg[ \epsilon \sum_{L=1}^{N} \big(\frac{1}{L}\omega(s,t)\big)^{\frac{1}{2\gamma'} + \frac{2}{p}} + \sum_{L=N+1}^{+\infty}\big(\frac{1}{L}\omega(s,t)\big)^{\frac{3}{p}} \bigg]. 
\end{align*}
Approximating the sums by integrals, we have
\begin{align*}
|\X_{s,t}^{3} - \Y_{s,t}^{3}| < C_{3} [\epsilon \omega(s,t)^{\frac{1}{2\gamma'} + \frac{2}{p}}(1 + \int_{1}^{N} x^{-(\frac{1}{2\gamma'} + \frac{2}{p})}dx) + \omega(s,t)^{\frac{3}{p}} \int_{N}^{+\infty} x^{- \frac{3}{p}}dx  ]. 
\end{align*}
Compute the second integral, and use
\begin{align*}
[\frac{1}{N+1} \omega(s,t)]^{(\frac{1}{p} - \frac{1}{2 \gamma'})}  \leq \epsilon, 
\end{align*}
we obtain
\begin{align} \label{inequality for all cases}
|\X_{s,t}^{3} - \Y_{s,t}^{3}| < C_{3} [\epsilon \omega(s,t)^{\frac{1}{2\gamma'} + \frac{2}{p}}(1 + \int_{1}^{N} x^{-(\frac{1}{2\gamma'} + \frac{2}{p})}dx) + \epsilon^{\frac{3-p}{1-p /2 \gamma'}} \omega(s,t) ]. 
\end{align}
Now we apply the above estimates to the three situations respectively.

\bigskip

\begin{flushleft}
\textbf{1.} $\frac{1}{2\gamma'} + \frac{2}{p} > 1$. 
\end{flushleft}
In this case, the integral
\begin{align*}
\int_{1}^{N} x^{-(\frac{1}{2\gamma'} + \frac{2}{p})}dx < \int_{1}^{+\infty} x^{-(\frac{1}{2\gamma'} + \frac{2}{p})}dx < +\infty
\end{align*}
converges. On the other hand, \eqref{assumption on eta} implies
\begin{align*}
\epsilon^{\frac{3-p}{1- p / 2 \gamma'}} \omega(s,t) < \epsilon \omega(s,t)^{\frac{1}{2\gamma'} + \frac{2}{p}}, 
\end{align*}
thus, from \eqref{inequality for all cases}, we get
\begin{align*}
|\X_{s,t}^{3} - \Y_{s,t}^{3}| < C_{3} \epsilon \omega(s,t)^{\frac{1}{2\gamma'} + \frac{2}{p}}. 
\end{align*}

\bigskip

\begin{flushleft}
\textbf{2.} $\frac{1}{2\gamma'} + \frac{2}{p} = 1$. 
\end{flushleft}
In this case, $\frac{1}{p} - \frac{1}{2 \gamma'} = \frac{3-p}{p}$, and $\frac{3 - p}{1 - p/2 \gamma'} = 1$. Thus, by the second inequality in \eqref{the unique integer N}, we have
\begin{align*}
\int_{1}^{N} x^{-1} dx = \log N < \log \omega(s,t) - \frac{p}{3 - p} \log \epsilon. 
\end{align*}
On the other hand, \eqref{assumption on eta} gives
\begin{align*}
\log \omega(s,t) - \frac{p}{3 - p} \log \epsilon > 0. 
\end{align*}
Combining the previous two bounds with \eqref{inequality for all cases}, we get
\begin{align*}
|\X_{s,t}^{3} - \Y_{s,t}^{3}| < C_{3} \epsilon [1 + \log \omega(s,t) - \frac{p}{3 - p} \log \epsilon] \omega(s,t). 
\end{align*}
We can simplify the above inequality to
\begin{align*}
|\X_{s,t}^{3} - \Y_{s,t}^{3}| < C_{3} \epsilon[1 + \log^{+}(\omega(0,T) / \epsilon^{\frac{p}{3-p}})] \omega(s,t), 
\end{align*}
where we have also included the possibility of $\epsilon \geq \omega(0,T)^{\frac{3}{p} - 1}$.

\bigskip

\begin{flushleft}
\textbf{3.} $\frac{1}{2\gamma'} + \frac{2}{p} < 1$. 
\end{flushleft}
Now we have
\begin{align*}
1 + \int_{1}^{N} x^{- (\frac{1}{2\gamma'} + \frac{2}{p})} dx < C N^{1 - \frac{1}{2\gamma'} - \frac{2}{p}} < C \cdot \epsilon^{-(1 - \frac{1}{2\gamma'} - \frac{2}{p}) / \frac{1}{p} - \frac{1}{2 \gamma'}} \omega(s,t)^{1 - \frac{1}{2\gamma'} - \frac{2}{p}}, 
\end{align*}
where the second inequality follows from \eqref{the unique integer N}. Combining the above bound with \eqref{inequality for all cases}, we obtain
\begin{align*}
|\X_{s,t}^{3} - \Y_{s,t}^{3}| < C_{3} \epsilon^{\frac{3 - p}{1 - p / 2 \gamma'}} \omega(s,t). 
\end{align*}

\end{proof}

\bigskip

The following theorem, obtained with the standard induction argument, gives estimates for all levels $n = 1, 2, \cdots$.

\bigskip

\begin{theorem} \label{main estimate}
Let $(X,Y)$ and $\rho,\gamma$ as in Proposition \ref{prop_estimates_first_levels}, $p>2\rho$ and $\gamma' > \gamma$. Then there exists a (random) control $\hat{\omega}$ such that for every $q \geq 1$, we have
\begin{align*}
|\hat{\omega}(0,T)|_{L^{q}} \leq M
\end{align*}
where $M = M(p, \rho, \gamma, \gamma',q,K)$, and for each $n$ there exists a (deterministic) constant $C_{n}$ depending on $p$ and $\gamma'$ such that a.s. for all $[s,t]$:  
\begin{enumerate}
\item If $\frac{1}{2\gamma'} + \frac{2}{p} > 1$, then we have
\begin{align*}
|\X_{s,t}^{n} - \Y_{s,t}^{n}| < C_{n} \epsilon \hat{\omega}(s,t)^{\frac{1}{2\gamma'} + \frac{n-1}{p}}
\end{align*}

\item If $\frac{1}{2\gamma'} + \frac{2}{p} = 1$, then we have
\begin{align*}
|\X_{s,t}^{n} - \Y_{s,t}^{n}| < C_{n} \epsilon \cdot (1 + \log^{+} \big[\hat{\omega}(0,T) / \epsilon^{1 - \frac{p}{2 \gamma'}} \big]) \cdot \hat{\omega}(s,t)^{\frac{1}{2 \gamma'} + \frac{n-1}{p}}. 
\end{align*}

\item If $\frac{1}{2\gamma'} + \frac{2}{p} < 1$, then for all $s < t$ and all small $\epsilon$, we have
\begin{align} \label{the third inequality}
	|\X_{s,t}^{n} - \Y_{s,t}^{n}| < C_{n} \epsilon^{\frac{3-p}{1 - p / 2 \gamma'}} \hat{\omega}(s,t)^{\frac{n-1+\{p\}}{p}}. 
\end{align}

\end{enumerate}
\end{theorem}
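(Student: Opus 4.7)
The proof is by induction on $n$. The cases $n=1,2$ are exactly Corollary~\ref{first two levels deterministic estimates}, and the base case $n=3$ (in each of the three regimes) is Theorem~\ref{third level estimate}. The random control $\hat\omega$ can be taken to be the one already constructed in Corollary~\ref{first two levels deterministic estimates}, possibly enlarged by an absolute constant, so the claimed $L^q$-bound $|\hat\omega(0,T)|_{L^q}\leq M$ comes for free. Lyons' extension theorem (cf.\ \cite[Thm.~2.2.1]{L98}) canonically defines $\X^n,\Y^n$ for $n\geq 3$ and supplies the pure-path bounds $|\X^n_{s,t}|\vee|\Y^n_{s,t}|\leq c_n\hat\omega(s,t)^{n/p}$, which we shall use freely below.

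For the inductive step from $n\geq 3$ to $n+1$, the plan is to apply the drop-a-point strategy of Lyons. Given a dissection $D=\{s=u_0<\cdots<u_{L+1}=t\}$, superadditivity of $\hat\omega$ yields an index $j$ with $\hat\omega(u_{j-1},u_{j+1})\leq (2/L)\hat\omega(s,t)$, and Lemma~\ref{lemma_essentialexttheorem} gives
\begin{align*}
(\hat\X^D_{s,t}-\hat\Y^D_{s,t})^{n+1}-(\hat\X^{D^j}_{s,t}-\hat\Y^{D^j}_{s,t})^{n+1}=\sum_{k=1}^{n}\bigl[\X^k_{u_{j-1},u_j}\otimes\X^{n+1-k}_{u_j,u_{j+1}}-\Y^k_{u_{j-1},u_j}\otimes\Y^{n+1-k}_{u_j,u_{j+1}}\bigr].
\end{align*}
I would split each bracket via $a\otimes b-a'\otimes b'=(a-a')\otimes b+a'\otimes(b-b')$, apply the inductive bound on $\X^m-\Y^m$ (from Corollary~\ref{first two levels deterministic estimates} if $m\in\{1,2\}$, from the induction hypothesis if $m\geq 3$) to the difference factor, and the pure-path bound to the remaining factor, arriving at an estimate of the form
\begin{align*}
|(\hat\X^D-\hat\Y^D)^{n+1}_{s,t}-(\hat\X^{D^j}-\hat\Y^{D^j})^{n+1}_{s,t}|\leq C_n\,\epsilon_\star\,(2/L)^{\theta_n}\hat\omega(s,t)^{\theta_n},
\end{align*}
where $\epsilon_\star$ and $\theta_n$ are the prefactor/exponent appropriate to the case. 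Removing points successively and letting $|D|\to 0$ recovers $\X^{n+1}_{s,t}-\Y^{n+1}_{s,t}$ in the limit (Lemma~\ref{lemma_essentialexttheorem} ensures that all lower-level projections are unaffected by dropping a point), so the target bound follows once $\sum_L L^{-\theta_n}<\infty$. Since $p<3$, we have $\theta_n\geq n/p\geq 3/p>1$ for every $n\geq 3$, and convergence is automatic.

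Cases~1 and~2 then go through routinely with $\epsilon_\star=\epsilon$ (respectively $\epsilon(1+\log^+[\hat\omega(0,T)/\epsilon^{1-p/2\gamma'}])$) and $\theta_n=\tfrac{1}{2\gamma'}+\tfrac{n}{p}$: each application of the inductive step raises the exponent by exactly $1/p$, matching the target. The genuine difficulty is Case~3, where the inductive bound carries $\epsilon^{\alpha}$ with $\alpha=\tfrac{3-p}{1-p/2\gamma'}$ and exponent $\tfrac{n-1+\{p\}}{p}$, whereas the level-$1,2$ estimates still carry $\epsilon^1$ with exponent $\tfrac{1}{2\gamma'}+\tfrac{k-1}{p}$. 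To combine these consistently when $k\in\{1,2\}$, I would again invoke $\epsilon\leq\hat\omega(s,t)^{1/p-1/2\gamma'}$ (otherwise \eqref{the third inequality} at level $n+1$ is vacuous, exactly as in \eqref{assumption on eta}) to trade the surplus $\epsilon^{1-\alpha}$ for extra powers of $\hat\omega$, after which the same drop-a-point summation delivers the exponent $\tfrac{n+\{p\}}{p}$ on the left of \eqref{the third inequality} at level $n+1$. The main obstacle is thus not a new analytic idea but the careful bookkeeping of exponents across the mixed-level cross-terms in Case~3.
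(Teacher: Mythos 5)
Your proposal is correct and follows essentially the same route as the paper: base cases from Corollary~\ref{first two levels deterministic estimates} and Theorem~\ref{third level estimate}, then Lyons' drop-a-point induction, which the paper simply delegates to ``the usual induction procedure works (cf.\ Theorem~2.2.2 of \cite{L98})'' because $\theta_n>1$ once $n\geq 3$. You correctly identify the two load-bearing observations: that $p<3$ forces $\theta_n\geq 3/p>1$ so the geometric series converges without needing the two-regime splitting from the level-3 proof, and that in Case~3 the low-level estimates (which carry $\epsilon^1$ and exponent $\frac{1}{2\gamma'}+\frac{k-1}{p}$) must be converted to the inductive form $\epsilon^{\frac{3-p}{1-p/2\gamma'}}\hat\omega^{\frac{k-1+\{p\}}{p}}$ via $\epsilon<\hat\omega(s,t)^{1/p-1/2\gamma'}$, which is exactly how the paper handles it; you also correctly note this assumption is harmless because otherwise the pure-path bound $|\X^n-\Y^n|\leq 2c_n\hat\omega^{n/p}$ already gives the claim. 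The only cosmetic difference is that you spell out the cross-term decomposition $a\otimes b-a'\otimes b'=(a-a')\otimes b+a'\otimes(b-b')$ explicitly, while the paper leaves it implicit.
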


\begin{proof}
We prove the case when $\frac{1}{2\gamma'} + \frac{2}{p} < 1$; the other two situations are similar. Let $\hat{\omega}$ be the control in the previous theorem. Fix an enhanced sample path $(\X, \Y)$, the corresponding realisation $\omega$ of $\hat{\omega}$, and $s < t \in [0, T]$. We may still assume \eqref{assumption on eta} without loss of generality. Thus, for $n = 1, 2$, we have
\begin{align*}
|\X_{s,t}^{n} - \Y_{s,t}^{n}| < \epsilon \omega(s,t)^{\frac{1}{2 \gamma'} + \frac{n-1}{p}} < C_{n} \epsilon^{\frac{3-p}{1 - p /2 \gamma'}} \omega(s,t)^{\frac{n-1+\{p\}}{p}},
\end{align*}
where the second inequality comes from \eqref{assumption on eta}. The above inequality also holds for $k = 3$ by the previous theorem. Now, suppose \eqref{the third inequality} holds for $k = 1, \cdots, n$, where $n \geq 3$, then for level $k = n + 1$, the exponent is expected to be
\begin{align*}
\frac{n + \{p\}}{p} > 1, 
\end{align*}
so that the usual induction procedure works (cf. \cite{L98}, Theorem 2.2.2.). Thus, we prove \eqref{the third inequality} for all $n$. 
\end{proof}

\bigskip

\subsection{Proof of Theorem \ref{main theorem}}

\begin{proof}
We prove the second situation when $\frac{1}{2\gamma} + \frac{1}{\rho} \leq 1$. The first one is similar. Let $\epsilon = \sup_{u \in [0, T]} |X_{u} - Y_{u}|_{L^{2}}^{1 - \frac{\rho}{\gamma}}$. It is sufficient to show that for every $p>2\rho$ there is a constant $C$ such that
\begin{align*}
	|\varrho_{\sigma-\text{var}}^{N}(\X, \Y)|_{L^{q}} \leq C \epsilon^{\frac{3 - p}{1 - \rho / \gamma}}, 
\end{align*}
where $\sigma > 2 \gamma$ and $N \geq \lfl \sigma \rfl$ both satisfy the assumptions of Theorem \ref{main theorem}. Set
\begin{align*}
	\rho':=(1+\eta)\rho,\quad p := 2(1+2\eta)\rho, \quad \gamma' := (1+\eta)\gamma, \quad \gamma'' := (1 + 2\eta)\gamma
\end{align*}
for some $\eta>0$. We can choose $\eta$ small enough such that $\frac{1}{\rho'} + \frac{1}{\gamma'}>1$ and $p<3$ hold, and the conditions of Theorem \ref{main estimate} are satisfied for $\rho'$ and $\gamma'$. Clearly $\frac{1}{\gamma''} + \frac{2}{p} < \frac{1}{2\gamma} + \frac{2}{p} \leq 1 $, thus Theorem \ref{main estimate} implies that
\begin{align*}
	|\X_{s,t}^{n} - \Y_{s,t}^{n}| < C_{n} \epsilon^{\frac{3-p}{1 - \rho / \gamma}} \hat{\omega}(s,t)^{\frac{n-1+\{p\}}{p}}
\end{align*} 
holds a.s. for any $n$ and $s<t$ where $\hat{\omega}$ is a random control as in Theorem \ref{main estimate}. Furthermore, for any $n$,
\begin{align} \label{rewrite the exponents}
\frac{n - 1 + \{p\}}{p} = \frac{n}{2 \gamma''} + n(\frac{1}{p} - \frac{1}{2\gamma''}) - \frac{1 - \{p\}}{p} = \frac{n}{2 \gamma''} + (n-1)(\frac{1}{p} - \frac{1}{2 \gamma''}) + (1 - \frac{2}{p} - \frac{1}{2\gamma''}). 
\end{align}
Note that the last expression implies that
\begin{align*}
	\theta_n := n(\frac{1}{p} - \frac{1}{2\gamma''}) - \frac{1 - \{p\}}{p} >0 
\end{align*}
for all $n$. Fix a dissection $D = \{0 = u_0 < \ldots < u_L < T\}$ of the interval $[0,T]$. Using $\hat{\omega}(u_{i}, u_{i+1}) \leq \hat{\omega}(0,T)$, we have
\begin{align*}
\bigg( \sum_{i}|\X_{u_{i}, u_{i+1}}^{n} - \Y_{u_{i}, u_{i+1}}^{n}|^{\frac{\sigma}{n}} \bigg)^{\frac{n}{\sigma}} \leq C_{n} \epsilon^{\frac{3 - p}{1 - \rho / \gamma}} \hat{\omega}(0,T)^{\theta_n}  \bigg(\sum_{i} \hat{\omega}(u_{i}, u_{i+1})^{\frac{\sigma}{2\gamma''}} \bigg)^{\frac{n}{\sigma}}.
\end{align*}
Choosing $\eta$ smaller if necessary, we may assume that $\sigma \geq 2\gamma''$ and super-additivity of the control implies
\begin{align*}
\bigg(\sum_{i} \hat{\omega}(u_{i}, u_{i+1})^{\frac{\sigma}{2\gamma''}} \bigg)^{\frac{n}{\sigma}} \leq \hat{\omega}(0,T)^{\frac{n}{2 \gamma''}}. 
\end{align*}
Passing to the supremum over all partitions of $[0,T]$, we have
\begin{align*}
\sup_{D} \bigg( \sum_{i}|\X_{u_{i}, u_{i+1}}^{n} - \Y_{u_{i}, u_{i+1}}^{n}|^{\frac{\sigma}{n}} \bigg)^{\frac{n}{\sigma}} \leq C_{n} \epsilon^{\frac{3 - p}{1 - \rho / \gamma}} \hat{\omega}(0,T)^{\frac{n-1+\{p\}}{p}}. 
\end{align*}
Let $q \geq 1$. By Theorem \ref{main estimate}, there is a constant $M$ depending on $\rho, \gamma, \sigma, \delta, q$ and $K$ such that $|\hat{\omega}(0,T)|_{L^{q}} \leq M$. Taking $L^{q}$ norm on both sides, we have
\begin{align*}
|\varrho_{\sigma-\text{var}}^{N}(\X, \Y)|_{L^{q}} \leq C \epsilon^{\frac{3 - p}{1 - \rho / \gamma}}
\end{align*}
which was the claim.
\end{proof}

\bigskip

\bigskip

\section{Applications}

\subsection{Convergence rates of rough differential equation}

\bigskip

Consider the \textit{rough differential equation} of the form
\begin{align} \label{rough differential equation}
dY_{t} = \sum_{i=1}^d V_i(Y_t)\, dX^i_t =: V(Y_{t})\, dX_{t};\quad Y_0 \in \R^e
\end{align}
where $X$ is a centered Gaussian process in $\R^{d}$ with independent components and $V = (V_i)_{i=1}^d$ a collection of bounded, smooth vector fields with bounded derivatives in $\R^e$. Rough path theory gives meaning to the pathwise solution to \eqref{rough differential equation} in the case when the covariance $R_X$ has finite $\rho$-variation for some $\rho<2$. Assume that $\rho \in [1, \frac{3}{2})$ and that there is a constant $K$ such that
\begin{align}\label{eqn_hoelder_cond_rho_var}
	V_{\rho}(R_X;[s,t]^2) \leq K |t-s|^{\frac{1}{\rho}}
\end{align}
for all $s<t$ (note that this condition implies that the sample paths of $X$ are $\alpha$-H\"older for all $\alpha<\frac{1}{2\rho}$). For simplicity, we also assume that $[0,T]=[0,1]$. For every $k\in\N$, we can approximate the sample paths of $X$ piecewise linear at the time points $\{0 < 1/k < 2/k < \ldots < (k-1)/k < 1\}$. We will denote this process by $X^{(k)}$. Clearly, $X^{(k)} \to X$ uniformly as $k\to\infty$. Now we substitute $X$ by $X^{(k)}$ in \eqref{rough differential equation}, solve the equation and obtain a solution $Y^{(k)}$; we call this the Wong-Zakai approximation of $Y$. One can show, using rough path theory, that $Y^{(k)} \to Y$ a.s. in uniform topology as $k\to\infty$. The proposition below is an immediate consequence of Theorem \ref{main theorem} and gives us rates of convergence.

\bigskip

\begin{prop}
The mesh size $\frac{1}{k}$ Wong-Zakai approximation converges uniformly to the solution of \eqref{rough differential equation} with a.s. rate at least $k^{ - (\frac{3}{2\rho} - 1 - \delta)}$ for any $\delta \in (0, \frac{3}{2\rho} - 1)$. In particular, the rate is arbitrarily close to $\frac{1}{2}$ when $\rho = 1$, which is the sharp rate in that case.
\end{prop}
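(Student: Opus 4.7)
The strategy is to reduce the claim to a rough-path convergence estimate via the local Lipschitz continuity of the It\={o}--Lyons solution map, and then apply Theorem \ref{main theorem} to the pair $(X, X^{(k)})$. Independence of components for the joint process is inherited from $X$ (since $X^{(k),i}$ is a measurable function of $X^i$), and a standard fact (cf.~\cite{FV10G, FR11b}) gives $V_\rho(R_{(X, X^{(k)})}; [0,1]^2) \leq K'$ for a constant $K'$ independent of $k$. Hence the hypotheses of Theorem \ref{main theorem} are satisfied with $k$-uniform constants.

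The next step is to estimate $\sup_{t \in [0,1]} |X_t - X^{(k)}_t|_{L^2}$. For $t \in [j/k, (j+1)/k]$, $X^{(k)}_t$ is an affine combination of $X_{j/k}$ and $X_{(j+1)/k}$, so the triangle inequality reduces the problem to bounding $|X_t - X_s|_{L^2}^2 = |R_X(A)|$ for $A$ a square of side at most $1/k$. The H\"older-type hypothesis \eqref{eqn_hoelder_cond_rho_var} yields $|R_X(A)| \leq K k^{-1/\rho}$ and hence $\sup_t |X_t - X^{(k)}_t|_{L^2} \leq C k^{-1/(2\rho)}$. Since $\rho \in [1, 3/2)$, I may pick $\gamma \in (\rho, \rho/(2(\rho-1)))$ (with the right endpoint equal to $+\infty$ when $\rho = 1$) arbitrarily close to $\rho/(2(\rho-1))$, placing us in case (i) of Theorem \ref{main theorem}. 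The resulting bound
\[
|\varrho^N_{\sigma\text{-var}}(\mathbf{X}, \mathbf{X}^{(k)})|_{L^q} \leq C k^{-\frac{1}{2\rho}(1 - \rho/\gamma)}
\]
has exponent approaching $\frac{1}{2\rho} - \frac{\rho - 1}{\rho} = \frac{3}{2\rho} - 1$ as $\gamma \uparrow \rho/(2(\rho-1))$. A Borel--Cantelli argument (with $q$ chosen large) then upgrades this $L^q$ bound to an almost sure rate of $k^{-(3/(2\rho) - 1 - \delta)}$ for any prescribed $\delta > 0$.

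Finally, the It\={o}--Lyons map is locally Lipschitz from the space of geometric $\sigma$-rough paths into the space of continuous paths with the uniform metric (cf.~\cite[Chapter 10]{FV10}), with Lipschitz constant depending polynomially on the $\sigma$-variation of the driving rough paths. By Theorem \ref{main estimate} applied to $(X, X^{(k)})$, the $\sigma$-variation of both $\mathbf{X}$ and $\mathbf{X}^{(k)}$ is bounded by a single $k$-independent random variable with moments of all orders; hence the Lipschitz constant along the sequence $(\mathbf{X}^{(k)})_k$ is almost surely bounded. Multiplying into the almost sure rate gives the claimed uniform convergence rate for $Y^{(k)} \to Y$. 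The most delicate point is this last step: to turn the $L^q$ rough-path rate into a uniform almost sure rate for $Y^{(k)}$, one needs the Lipschitz constants along the sequence to be absorbed into a single a.s.~finite random variable, which requires precisely the uniform-in-$k$ moment bounds on the rough-path norms of $\mathbf{X}^{(k)}$ supplied by Corollary \ref{first two levels deterministic estimates}.
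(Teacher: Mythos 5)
Your proposal is correct and follows essentially the same route as the paper: estimate $\sup_t |X_t - X^{(k)}_t|_{L^2} = O(k^{-1/(2\rho)})$ from the H\"older-type $\rho$-variation hypothesis, apply Theorem~\ref{main theorem} to the pair $(X, X^{(k)})$ using the $k$-uniform covariance bound, optimize $\gamma$ up to the critical value $\gamma^* = \rho/(2(\rho-1))$ determined by $\frac{1}{2\gamma} + \frac{1}{\rho} = 1$, upgrade the $L^q$ bound to an a.s.\ rate by Borel--Cantelli, and conclude via the local Lipschitz property of the It\={o}--Lyons map. Two small remarks on presentation rather than substance: the paper also notes that case (ii) of Theorem~\ref{main theorem} yields the same limiting rate $k^{-(3/(2\rho)-1)}$ for any admissible $\gamma$, whereas you optimize purely within case (i), which is equally valid; and your closing paragraph makes explicit the role of the $k$-uniform moment bounds on $\|\mathbf{X}^{(k)}\|_{\sigma\text{-var}}$ (equivalently, the a.s.\ boundedness of the local Lipschitz constants along the sequence), a point the paper leaves implicit but which is indeed needed for the uniform-norm conclusion on $Y^{(k)}\to Y$.
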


\begin{proof}
First, one shows that \eqref{eqn_hoelder_cond_rho_var} implies that
\begin{align*}
\sup_{t \in [0,1]} |X_{t}^{(k)} - X_{t}|_{L^{2}} = O(k^{- \frac{1}{2\rho}}). 
\end{align*}
One can show (cf. \cite[Chapter 15.2.3]{FV10}) that there is a constant $C$ such that
\begin{align*}
	\sup_{k\in\N} V_{\rho}(R_{(X,X^{(k)})};[s,t]^2) \leq C |t-s|^{\frac{1}{\rho}}
\end{align*}
holds for all $s<t$. By choosing $q$ large enough, a Borel-Cantelli type argument applied to 
Theorem \ref{main theorem} shows that $\varrho_{\sigma-\text{var}}(\X,\X^{(k)}) \to 0$ a.s. for $k\to\infty$ with rate arbitrarily close to
\begin{align*}
k^{ - \frac{1}{2} (\frac{1}{\rho} - \frac{1}{\gamma})}, \qquad \text{if} \phantom{1} \frac{1}{2\gamma} + \frac{1}{\rho} > 1, 
\end{align*}
and arbitrarily close to
\begin{align*}
k^{- (3 - 2\rho)}, \qquad \text{if} \phantom{1} \frac{1}{2\gamma} + \frac{1}{\rho} \leq 1, 
\end{align*}
both cases are subject to $\gamma \geq \frac{3}{2}$ and $\frac{1}{\gamma} + \frac{1}{\rho} > 1$. Note that in the second situation, the actual value of $\gamma$ does not matter, and we always have a rate of 'almost' $\frac{3}{2\rho} - 1$. For the first situation, we need to let $\gamma$ as large as possible but still satisfy the constraints. The critical value is $\frac{1}{\gamma^{*}} = \frac{1}{2}(1 - \frac{1}{\rho})$, which also results in a rate that is arbitrarily close to $\frac{3}{2\rho} - 1$. Using the local Lipschitz property of the It\=o Lyons map (cf. \cite[Theorem 10.26]{FV10}), we conclude that the Wong-Zakai convergence rate is faster than
\begin{align*}
	k^{ - (\frac{3}{2\rho} - 1 - \delta)}
\end{align*}
for any $\delta > 0$ (but not for $\delta = 0$).
\end{proof}

\bigskip

\begin{remark}
For $\rho \in (1, \frac{3}{2})$, the rate above is not optimal. In fact, the sharp rate in this case is 'almost' $\frac{1}{\rho} - \frac{1}{2}$, as shown in \cite{FR11b}. The reason for the non-optimality of the rate is that we obtain the third level estimate merely based on the first two levels, which leads to a reduction in the exponent in the rate. On the other hand, this method does not use any Gaussian structure on the third level, and can be applied to more general processes. For the case $\rho = 1$, we recover the sharp rate of 'almost' $\frac{1}{2}$. 
\end{remark}

\bigskip

\subsection{The stochastic heat equation}

\bigskip

In the theory of stochastic partial differential equations (SPDEs), one typically considers the SPDE as an evolution equation in a function space. When it comes to the question of time and space regularity of the solution, one discovers that they will depend on the particular choice of this space. As a rule of thumb, the smaller the space, the lower the time regularity (\cite{H09}, Section 5.1). The most prominent examples of such spaces are Hilbert spaces, typically Sobolev spaces. However, in some cases, it can be useful to choose rough paths spaces instead (\cite{H11}). A natural question now is whether the known regularity results for Hilbert spaces are also true for rough paths spaces. In this section, we study the example of a modified stochastic heat equation for which we can give a positive answer.

Consider the stochastic heat equation: 
\begin{align}\label{eqn_modif_heat_eq}
	d\psi = (\partial_{x x} - 1)\psi\,dt + \sigma\,dW
\end{align}
where $\sigma$ is a positive constant, the spatial variable $x$ takes values in $[0,2\pi]$, $W$ is space-time white noise, i.e. a standard cylindrical Wiener process on $L^2([0,2\pi],\R^d)$, and $\psi$ denotes the stationary solution with values in $\R^d$. The solution $\psi$ is expected to be almost $\frac{1}{4}$-H\"{o}lder continuous in time and almost $\frac{1}{2}$-H\"{o}lder continuous in space (cf. \cite{H09}). In the next Theorem, we show that this is indeed the case if we choose the appropriate rough paths space.

\bigskip

\begin{theorem}
	Let $p > 2$. Then, for any fixed $t \geq 0$, the process $x\mapsto \psi_t(x)$ is a Gaussian process (in space) which can be lifted to an enhanced Gaussian process $\Psi_t(\cdot)$, a process with sample paths in $C^{0,p-\text{var}}([0,2\pi],G^{\lfloor p \rfloor}(\R^d))$. Moreover, $t \mapsto \Psi_{t}(\cdot)$ has a H\"{o}lder continuous modification 
	(which we denote by the same symbol). More precisely, for every $\alpha \in \left(0, \frac{1}{4} - \frac{1}{2p}\right)$, there exists a (random) constant $C$ such that
\begin{align*}
	\varrho_{p-\text{var}}(\Psi_s,\Psi_t) \leq C |t - s|^{\alpha}
\end{align*}
	holds almost surely for all $s<t$. In particular, choosing $p$ large gives a time regularity of almost $\frac{1}{4}$-H\"{o}lder.
\end{theorem}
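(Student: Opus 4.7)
The natural strategy is to apply Theorem \ref{main theorem} with the pair $(X,Y)=(\psi_s(\cdot),\psi_t(\cdot))$ viewed as Gaussian processes on the spatial interval $[0,2\pi]$ at two fixed times $s<t$, and then to upgrade the resulting $L^q$-estimate on the rough paths distance to almost sure time-H\"older continuity via a Kolmogorov-type argument. Note that the $d$ components of $\psi$ are independent because $W$ is a cylindrical Wiener process on $L^2([0,2\pi],\R^d)$, which is precisely the structural assumption needed in Theorem \ref{main theorem}.

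First I would exploit the Fourier decomposition of the stationary solution: since \eqref{eqn_modif_heat_eq} diagonalises in the Fourier basis $(e^{ikx})_{k\in\Z}$, one has $\psi_t(x)=\sum_{k\in\Z}\hat\psi_t(k)e^{ikx}$, where the modes $\hat\psi_t(k)$ are independent complex Ornstein--Uhlenbeck processes with drift coefficient $k^2+1$. From this two inputs emerge. On the one hand, writing
\begin{align*}
|\psi_t(x)-\psi_s(x)|_{L^2}^2 = \sum_{k\in\Z}\frac{\sigma^2}{k^2+1}\bigl(1-e^{-(k^2+1)|t-s|}\bigr)
\end{align*}
and splitting the sum at frequency $k\sim|t-s|^{-1/2}$ gives the expected bound $\sup_{x\in[0,2\pi]}|\psi_t(x)-\psi_s(x)|_{L^2}\leq C|t-s|^{1/4}$. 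On the other hand, the joint spatial covariance $R_{(\psi_s,\psi_t)}$ is translation invariant in space with kernel given by an absolutely convergent Fourier series, hence smooth in $x-y$ and of finite $1$-variation as a $2$D function; the bound can be made uniform in $(s,t)$ since the time variables enter only through the damping factors $e^{-(k^2+1)|t-s|}\in[0,1]$.

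With these two ingredients in hand I would apply Theorem \ref{main theorem}(i) with $\rho=1$ (so the constraint $1/\gamma+1/\rho>1$ is automatic for any finite $\gamma$), choosing $\gamma$ slightly less than $p/2$ so that the constraint $\sigma>2\gamma$ is satisfied with $\sigma:=p$. For every $q\geq1$ this yields
\begin{align*}
|\varrho_{p-\text{var}}(\Psi_s,\Psi_t)|_{L^q}\leq C\,|t-s|^{\frac{1}{4}(1-1/\gamma)},
\end{align*}
with exponent that can be made arbitrarily close to $\frac14-\frac{1}{2p}$ by letting $\gamma\nearrow p/2$. A Kolmogorov continuity argument applied to the two-parameter increments $(s,t)\mapsto\varrho_{p-\text{var}}(\Psi_s,\Psi_t)$, with $q$ chosen large enough to beat the H\"older exponent, then provides a modification of $t\mapsto\Psi_t$ which is $\alpha$-H\"older in the rough paths metric for every $\alpha<\frac14-\frac{1}{2p}$.

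I expect the main technical obstacle to lie in the uniform-in-$(s,t)$ control of the $\rho$-variation of the joint covariance $R_{(\psi_s,\psi_t)}$, in particular for the off-diagonal blocks $\me[\psi_s(x)\psi_t(y)]$, since one must produce a single control function that remains well-behaved as $|t-s|\downarrow0$. The $L^2$ temporal increment, the application of Theorem \ref{main theorem}(i), and the final Kolmogorov step are by contrast essentially routine once the covariance analysis is in place.
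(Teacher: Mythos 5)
Your plan matches the paper's proof essentially step for step: diagonalise in Fourier modes to reduce to independent stationary Ornstein--Uhlenbeck processes, read off the $|t-s|^{1/4}$ bound on $\sup_x|\psi_t(x)-\psi_s(x)|_{L^2}$ from the mode sum, observe that the joint spatial covariance has finite $1$-variation uniformly in $(s,t)$, apply Theorem~\ref{main theorem}(i) with $\rho=1$ and $\gamma$ chosen just below $p/2$, and finish with Kolmogorov on the Polish space $C^{0,p\text{-var}}$. The only cosmetic difference is your use of the complex exponential basis where the paper uses real sines and cosines (and your displayed $L^2$ formula should carry the factor $|e_k(x)|^2$, though this only affects constants); the identification of the uniform-in-$(s,t)$ variation bound as the technical crux, and the arithmetic $\alpha < (\tfrac14-\tfrac{1}{4\gamma})-\tfrac1q \to \tfrac14-\tfrac{1}{2p}$, coincide exactly with the paper's argument.
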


\begin{proof}
	The fact that $x \mapsto \psi_t(x)$ can be lifted to a process with rough sample paths and that there is some H\"{o}lder-continuity in time was shown in Lemma 3.1 in \cite{H11}, see also \cite{FH11}. We quickly repeat the argument and show where we can use our results in order to derive the exact H\"{o}lder exponents. Using the standard Fourier basis
	\begin{align*}
		e_k(x) = \begin{cases}
			\frac{1}{\sqrt{\pi}} \sin(kx) &\text{if } k > 0\\
			\frac{1}{\sqrt{2\pi}} &\text{if } k = 0\\
			\frac{1}{\sqrt{\pi}} \cos(kx) &\text{if } k < 0
			\end{cases}
	\end{align*}
	the equation \eqref{eqn_modif_heat_eq} can be rewritten as a system of SDEs
	\begin{align*}
		dY^k_t = -(k^2 + 1)Y^k_t\, dt + \sigma \, dW^k_t
	\end{align*}
	where $(W^k)_{k\in\Z}$ is a collection of independent standard Brownian motions and $(Y^k)_{k\in\Z}$ are the stationary solutions of the SDEs, i.e. a collection of centered, independent, stationary Ornstein-Uhlenbeck processes. The solution of \eqref{eqn_modif_heat_eq} is thus given by the infinite sum $\psi_t(x) = \sum_{k\in\Z} Y^k_t e_k(x)$. One can easily see that
	\begin{align*}
		E\left[ \psi_s(x) \otimes \psi_t(y) \right] = \frac{\sigma^2}{4\pi} \sum_{k\in \Z} \frac{\cos(k(x-y))}{1+k^2} e^{-(1+k^2)|t-s|} \times I_d
	\end{align*}
	where $I_d$ denotes the identity matrix in $\R^{d\times d}$. In particular, for $s=t$,
	\begin{align*}
		E\left[ \psi_t(x) \otimes \psi_t(y) \right] = K(x-y) \times I_d
	\end{align*}
	where $K$ is given by
	\begin{align*}
		K(x) = \frac{\sigma^2}{4\sinh(\pi)} \cosh(|x|-\pi)
	\end{align*}
	for $x\in[-\pi,\pi]$ and extended periodically for the remaining values of $x$ (this can be derived by a Fourier expansion of the function $x\mapsto \cosh(|x|-\pi)$). In particular, one can calculate that $x\mapsto \psi_t(x)$ is a Gaussian process with covariance of finite $1$-variation (see the remark at the end of the section for this fact), hence $\psi_t$ can be lifted to process $\Psi_t$ with sample paths in the rough paths space $C^{0,p-\text{var}}([0,2\pi],G^{\lfloor p \rfloor}(\R^d))$ for any $p > 2$. 
	
Furthermore, for any $s<t$, $x \mapsto (\psi_s(x),\psi_t(x))$ is a Gaussian process which fulfils the assumptions of Theorem \ref{main theorem} and the covariance $R_{(\psi_s,\psi_t)}$ also has finite $1$-variation, uniformly bounded for all $s<t$, hence
	\begin{align*}
		\sup_{s<t} |R_{(\psi_s,\psi_t)}|_{1-\text{var};[0,2\pi]^2} =: c_1 <\infty.
	\end{align*}
	Therefore, for any $\gamma \in (1,p/2)$ and $q \geq 1$ there is a constant $C=C(p,\gamma,c_1,q)$ such that
	\begin{align*}
		\left| \varrho_{p-\text{var}}(\Psi_s,\Psi_t) \right|_{L^q} \leq C \sup_{x\in [0,2\pi]}|\psi_t(x) - \psi_s(x)|_{L^2}^{1-\frac{1}{\gamma}}
	\end{align*}
	holds for all $s<t$. A straightforward calculation (cf. \cite[Lemma 3.1]{H11}) shows that
	\begin{align*}
		|\psi_t(x) - \psi_s(x)|_{L^2} \leq c_2 |t-s|^{1/4}
	\end{align*}
	for a constant $c_2$. In particular, we can find $\gamma$ and $q$ large enough such that
	\begin{align*}
  \alpha < \frac{\frac{q}{4}(1 - \frac{1}{\gamma}) - 1}{q} = \left(\frac{1}{4} - \frac{1}{4\gamma}\right) - \frac{1}{q} < \frac{1}{4} - \frac{1}{2p}. 
	\end{align*}
	Since $C^{0,p-\text{var}}$ is a Polish space, we can apply the usual Kolmogorov continuity criterion to conclude. 
\end{proof}

\bigskip

\begin{remark}
We emphasize that here, for every fixed $t$, the process $\psi_{t}(\cdot)$ is a Gaussian process, where the spatial variable $x$ should now be viewed as 'time'. This idea is due to M.Hairer. Knowing that the spatial regularity is 'almost' $1/2$ for every fixed time $t$, one could guess that covariance of this spatial Gaussian process has finite $1$-variation. For a formal calculation, we refer to \cite{H09} or \cite{FH11}.
\end{remark}

\bigskip

\bigskip

\section{Appendix}

The next Lemma is a slight modification of \cite[Theorem A.13]{FV10}. The proof follows the ideas of \cite[Theorem 3.1]{FH11}.

\begin{lemma}[Kolmogorov for multiplicative functionals]\label{lemma_kolmogorov}
	Let $\X, \Y\colon [0,T]\times\Omega \to T^N(V)$ be random multiplicative functionals and assume that $\X(\omega)$ and $\Y(\omega)$ are continuous for all $\omega\in\Omega$. Let $\beta, \delta \in (0,1]$ and choose $\beta' < \beta$ and $\delta' < \delta$. Assume that there is a constant $M>0$ such that
	\begin{align*}
		|\X^n_{s,t}|_{L^{q/n}} &\leq M^n |t-s|^{n\beta} \\
		|\Y^n_{s,t}|_{L^{q/n}} &\leq M^n |t-s|^{n\beta} \\
		|\X^n_{s,t} - \Y^n_{s,t}|_{L^{q/n}} &\leq M^n \epsilon |t-s|^{\delta + (n-1)\beta}
	\end{align*}
	hold for all $s<t \in [0,T]$ and $n=1,\ldots,N$ where $\epsilon$ is a positive constant and $q \geq q_0$ where
	\begin{align*}
		q_0 := 1 + \left( \frac{1}{\beta - \beta'} \vee \frac{1}{\delta - \delta'} \right). 
	\end{align*}
	Then there is a constant $C=C(N,\beta,\beta',\delta,\delta')$ such that
	\begin{align}
		\left| \sup_{s<t \in [0,T]} \frac{|\X^n_{s,t}|}{|t-s|^{n\beta'}} \right|_{L^{\frac{q}{n}}} &\leq C M^n \label{eqn_hoelderforX}\\
		\left| \sup_{s<t \in [0,T]} \frac{|\Y^n_{s,t}|}{|t-s|^{n\beta'}} \right|_{L^{\frac{q}{n}}} &\leq C M^n \label{eqn_hoelderforY}\\
		\left| \sup_{s<t \in [0,T]} \frac{|\X^n_{s,t} - \Y^n_{s,t}|}{|t-s|^{\delta' + (n-1)\beta'}} \right|_{L^{\frac{q}{n}}} &\leq C M^n \epsilon \label{eqn_hoelderforXY}
	\end{align}
	hold for all $n=1,\ldots,N$.
\end{lemma}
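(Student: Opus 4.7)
The estimates (\ref{eqn_hoelderforX}) and (\ref{eqn_hoelderforY}) are exactly the classical Kolmogorov criterion for multiplicative functionals (cf.\ FV10 Theorem A.13) applied separately at each level $n$, so the plan is to focus on the difference estimate (\ref{eqn_hoelderforXY}), which is where the hypothesis $q \geq q_{0}$ really intervenes.

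The core strategy is a dyadic chaining argument. Fix the dyadic grid $\mathcal{D}_{m}=\{kT2^{-m}\}_{0\leq k\leq 2^{m}}$. By a union bound over the $2^{m}$ intervals of length $T2^{-m}$ combined with the moment hypothesis,
\begin{align*}
\Big|\max_{k}|\X^{n}_{t^{m}_{k},t^{m}_{k+1}}-\Y^{n}_{t^{m}_{k},t^{m}_{k+1}}|\Big|_{L^{q/n}} \leq 2^{mn/q} M^{n}\epsilon (T2^{-m})^{\delta+(n-1)\beta}.
\end{align*}
Dividing by the target Hölder scale $(T2^{-m})^{\delta'+(n-1)\beta'}$ produces a geometric factor of the form $2^{-m[(\delta-\delta')q+(n-1)(\beta-\beta')q-n]/q}$, and $q\geq q_{0}$ makes the bracketed exponent strictly positive, since $q(\delta-\delta')>1$ and $q(\beta-\beta')>1$ separately force their sum to exceed $n$.

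To pass from the dyadic bound to the supremum over arbitrary $s<t$, I would invoke Chen's relation on both $\X$ and $\Y$. For any dyadic refinement $s=u_{0}<u_{1}<\cdots<u_{L}=t$,
\begin{align*}
\X^{n}_{s,t}-\Y^{n}_{s,t}=\sum_{k_{1}+\cdots+k_{L}=n}\Bigl(\X^{k_{1}}_{u_{0},u_{1}}\otimes\cdots\otimes\X^{k_{L}}_{u_{L-1},u_{L}}-\Y^{k_{1}}_{u_{0},u_{1}}\otimes\cdots\otimes\Y^{k_{L}}_{u_{L-1},u_{L}}\Bigr).
\end{align*}
Applying the identity $a\otimes b-c\otimes d=(a-c)\otimes b+c\otimes(b-d)$ recursively, each summand becomes a sum of tensor products in which exactly one factor is a difference $(\X^{k_{i}}-\Y^{k_{i}})$ on a dyadic block and every other factor is an $\X^{k_{j}}$ or a $\Y^{k_{j}}$ increment. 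Estimating in $L^{q/n}$ via Hölder's inequality with exponents $L^{q/k_{j}}$ on each factor reduces the problem to the dyadic bound above; the crucial arithmetic is that for any splitting $[\delta+(k_{i}-1)\beta]+(n-k_{i})\beta=\delta+(n-1)\beta$, so every term carries the same Hölder scale, and summing over scales $m'\geq m$ (with $T2^{-m-1}<t-s\leq T2^{-m}$) yields a convergent geometric series.

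The main obstacle is the bookkeeping of this dyadic expansion: one must verify that the number of compositions $k_{1}+\cdots+k_{L}=n$ contributing at scale $m'$ grows only polynomially in $L\sim 2^{m'-m}$ (rather than exponentially), so that the sum over scales still converges, and that the constant $C=C(N,\beta,\beta',\delta,\delta')$ that emerges is genuinely independent of $M$ and $\epsilon$. This is the same technical core as in FV10 Theorem A.13; the only novelty here is that one of the factors in each summand is a difference, which forces the separate moment condition $q(\delta-\delta')>1$ alongside $q(\beta-\beta')>1$ and thereby dictates the precise form of the threshold $q_{0}$.
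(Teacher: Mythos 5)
Your proposal correctly identifies the core ingredients---the dyadic union bound combined with the moment hypothesis, Chen's relation, the telescoping identity $a\otimes b - c\otimes d = (a-c)\otimes b + c\otimes(b-d)$, and the arithmetic showing why $q \geq q_0$ makes the relevant geometric series converge---but the critical passage from dyadic increments to arbitrary increments is left as an acknowledged ``bookkeeping obstacle,'' and that is exactly where a genuine gap remains. Expanding $\X^n_{s,t}-\Y^n_{s,t}$ in the full Chen sum over all compositions $k_1+\cdots+k_L=n$ produces, after discarding the zero entries, on the order of $L^n$ nonzero tensor products of increments over disjoint subintervals of the chaining partition, and you offer no mechanism to sum that many terms over all scales while keeping the constant $C$ independent of $M$ and $\epsilon$. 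Noting that the count is ``polynomial in $L$'' does not close the gap, since $L$ grows with the refinement and the degree-$n$ polynomial dependence is precisely what makes the naive expansion intractable.

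The paper sidesteps this combinatorial explosion by applying Chen's relation one point at a time and telescoping: from $\X^n_{s,\tau_{i+1}} - \X^n_{s,\tau_i} = \sum_{l=1}^n \X^{n-l}_{s,\tau_i}\otimes\X^l_{\tau_i,\tau_{i+1}}$ one obtains
\begin{align*}
|\X^n_{s,t}| \leq \sum_{i} |\X^n_{\tau_i,\tau_{i+1}}| + \sum_{l=1}^{n-1}\max_i |\X^{n-l}_{s,\tau_i}|\sum_i|\X^l_{\tau_i,\tau_{i+1}}|,
\end{align*}
which has only $O(Ln)$ terms and cleanly separates a ``boundary'' factor $\X^{n-l}_{s,\tau_i}$ on a long interval from a ``bulk'' factor $\X^l_{\tau_i,\tau_{i+1}}$ on a dyadic block. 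The boundary factors are controlled by the H\"older estimate at strictly lower levels---this is the reason the paper's Steps 2 and 4 proceed by \emph{induction over} $n$---while the bulk sums are absorbed into the random variables $K_X^l$, $K_Y^l$, $K_{X-Y}^l$ built from the dyadic maxima in Steps 1 and 3. For the difference estimate one inserts your telescoping identity into this \emph{one-step} decomposition, which gives the three-part bound in Step 4, after which the same induction closes. So your plan is in the right spirit, but the missing idea is the combination of the one-step telescoping form of Chen's relation with induction on the level $n$; this replaces the unresolved composition-counting and is what actually delivers the uniform constant.
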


\begin{proof}
	W.l.o.g., we may assume $T=1$. Let $(D_k)_{k\in\N}$ be the sequence of dyadic partitions of the interval $[0,1)$, i.e. $D_k = \left\{ \frac{l}{2^k}\ :\ l=0,\ldots,2^k-1 \right\}$. Clearly, $|D_k| = \frac{1}{\# D_k} = 2^{-k}$. Set
	\begin{align*}
		K_{k,X}^n &:= \max_{t_i \in D_k} |\X_{t_i,t_{i+1}}^n| \\
		K_{k,Y}^n &:= \max_{t_i \in D_k} |\Y_{t_i,t_{i+1}}^n| \\
		K_{k,X-Y}^n &:= \frac{1}{\epsilon} \max_{t_i \in D_k} |\X_{t_i,t_{i+1}}^n - \Y_{t_i,t_{i+1}}^n| \\
	\end{align*}
	for $n=1,\ldots,N$ and $k\in\N$. By assumption, we have
	\begin{align*}
		E |K_{k,X}^n|^{\frac{q}{n}} \leq E\sum_{t_i \in D_k} |\X^n_{t_i,t_{i+1}}|^{\frac{q}{n}} \leq \#D_k \max_{t_i \in D_k} E|\X^n_{t_i,t_{i+1}}|^{\frac{q}{n}} \leq M^q |D_k|^{q\beta -1}.
	\end{align*}
	In the same way one estimates $K_{k,Y}^n$ and $K_{k,X - Y}^n$, hence
	\begin{align}
		|K_{k,X}^n|_{L^{q/n}} &\leq M^n |D_k|^{n\beta - n/q} \label{eqn_estim_KkX}\\
		|K_{k,Y}^n|_{L^{q/n}} &\leq M^n |D_k|^{n\beta - n/q} \label{eqn_estim_KkY}\\
		|K_{k,X - Y}^n|_{L^{q/n}} &\leq M^n |D_k|^{\delta + (n-1)\beta - n/q}\label{eqn_estim_KkXY}.
	\end{align}
	Note the following fact: For any dyadic rationals $s<t$, i.e. $s<t \in \Delta := \bigcup_{k=1}^{\infty} D_k$, there is a $m\in\N$ such that $|D_{m+1}| < |t-s| \leq |D_m|$ and a partition
	\begin{align}\label{eqn_smart_partition}
		s=\tau_0 < \tau_1 < \ldots < \tau_N = t
	\end{align}
	of the interval $[s,t)$ with the property that for any $i=0,\ldots,N-1$ there is a $k\geq m+1$ with $[\tau_i,\tau_{i+1}) \in D_k$, but for fixed $k\geq m+1$ there are at most two such intervals contained in $D_k$.\\
	
	\textbf{Step 1:} We claim that for every $n=1,\ldots,N$ there is a real random variable $K_X^n$ such that $|K_X^n|_{L^{q/n}} \leq M^n c$ where $c = c(\beta,\beta',\delta,\delta')$ and that for any dyadic rationals $s<t$ and $m$, $(\tau_i)_{i=0}^N$ chosen as in \eqref{eqn_smart_partition} we have
	\begin{align}\label{eqn_step1}
		\sum_{i=0}^{N-1} \frac{|\X^n_{\tau_i,\tau_{i+1}}|}{|t-s|^{n\beta'}} \leq K_X^n.
	\end{align}
	Furthermore, the estimate \eqref{eqn_step1} also holds for $\Y^n$ and a random variable $K_Y^n$. Indeed: By the choice of $m$ and $(\tau_i)_{i=0}^N$,
	\begin{align*}
		\sum_{i=0}^{N-1} \frac{|\X^n_{\tau_i,\tau_{i+1}}|}{|t-s|^{n\beta'}} \leq \sum_{k=m+1}^{\infty} \frac{2 K_{k,X}^n}{|D_{m+1}|^{n \beta'}} \leq 2 \sum_{k=m+1}^{\infty} \frac{K_{k,X}^n}{|D_k|^{n \beta'}} \leq 2 \sum_{k=1}^{\infty} \frac{K_{k,X}^n}{|D_k|^{n \beta'}} =: K_X^n.
	\end{align*}
	It remains to prove that $|K_X^n|_{L^{q/n}} \leq M^n c$. By the triangle inequality and the estimate \eqref{eqn_estim_KkX},
	\begin{align*}
		\left| \sum_{k=1}^{\infty} \frac{K_{k,X}^n}{|D_k|^{n \beta'}} \right|_{L^{q/n}} \leq M^n \sum_{k=1}^{\infty} |D_k|^{n(\beta - 1/q - \beta')} \leq M^n \sum_{k=1}^{\infty} |D_k|^{(\beta - 1/q_0 - \beta')} < \infty
	\end{align*}
	since $\beta - 1/q_0 - \beta' > 0$ which shows the claim. \\
	
	\textbf{Step 2:} We show that \eqref{eqn_hoelderforX} and \eqref{eqn_hoelderforY} hold for all $n=1,\ldots,N$. It is enough to consider $\X$. Note first that, due to continuity, it is enough to show the estimate for $\sup_{s<t \in \Delta} \frac{|\X^n_{s,t}|}{|t-s|^{n\beta'}}$. By induction over $n$: For $n=1$, this just follows from the usual Kolmogorov continuity criterion. Assume that the estimate is proven up to level $n-1$. Let $s<t$ be any dyadic rationals and choose $m$ and $(\tau_i)_{i=0}^N$ as in \eqref{eqn_smart_partition}. Since $\X$ is a multiplicative functional,
	\begin{align*}
		|\X^n_{s,t}| \leq \sum_{i=0}^{N-1} |\X^n_{\tau_i,\tau_{i+1}}| + \sum_{l=1}^{n-1} \max_{i=1,\ldots,N} |\X_{s,\tau_i}^{n-l}| \sum_{i=0}^{N-1} |\X^l_{\tau_i,\tau_{i+1}}|
	\end{align*}
	and thus, using step 1,
	\begin{align*}
		\frac{|\X^n_{s,t}|}{|t-s|^{n\beta'}} &\leq \sum_{i=0}^{N-1} \frac{|\X^n_{\tau_i,\tau_{i+1}}|}{|t-s|^{n\beta'}} + \sum_{l=1}^{n-1} \sup_{u<v \in \Delta} \frac{|\X^{n-l}_{u,v}|}{|v-u|^{(n-l)\beta'}} \sum_{i=0}^{N-1}
		\frac{|\X^l_{\tau_i,\tau_{i+1}}|}{|t-s|^{l\beta'}}\\
		&\leq K_X^n + \sum_{l=1}^{n-1} \sup_{u<v \in \Delta} \frac{|\X^{n-l}_{u,v}|}{|v-u|^{(n-l)\beta'}} K_X^l.
	\end{align*}
	We can now take the supremum over all $s<t \in \Delta$ on the left. Taking the $L^{q/n}$-norm on both sides, using first the triangle, then the H\"{o}lder inequality and the estimates from step 1 together with the induction hypothesis gives the claim.\\
	
	\textbf{Step 3:} As in step 1, we claim that for any $n=1,\ldots,N$ there is a random variable $K_{X-Y}^n \in L^{q/n}$ such that for any dyadic rationals $s<t$ and $m$, $(\tau_i)_{i=0}^N$ chosen as above we have
	\begin{align}\label{eqn_step3}
		\sum_{i=0}^{N-1} \frac{|\X^n_{\tau_i,\tau_{i+1}} - \Y^n_{\tau_i,\tau_{i+1}}|}{|t-s|^{\delta' + (n-1)\beta'}} \leq K_{X-Y}^n \epsilon.
	\end{align}
	Furthermore, we claim that $|K_{X-Y}^n|_{L^{q/n}} \leq M^n \tilde{c}$ where $\tilde{c}=\tilde{c}(\beta,\beta',\delta,\delta')$. The proof follows the lines of step 1, setting
	\begin{align*}
		\frac{1}{\epsilon}\sum_{i=0}^{N-1} \frac{|\X^n_{\tau_i,\tau_{i+1}} - \Y^n_{\tau_i,\tau_{i+1}} |}{|t-s|^{\delta' + (n-1)\beta'}} \leq 2 \sum_{k=1}^{\infty} \frac{K_{k,X-Y}^n}{|D_k|^{\delta' + (n-1) \beta'}} =: K_{X-Y}^n.
	\end{align*}

	\textbf{Step 4:} We prove that \eqref{eqn_hoelderforXY} holds for all $n=1,\ldots,N$. By induction over $n$: The case $n=1$ is again just the usual Kolmogorov continuity criterion applied to $t\mapsto \epsilon^{-1}(X_t - Y_t)$. Assume the assertion is shown up to level $n-1$ and chose two dyadic rationals $s<t$. Using the multiplicative property, we have
	\begin{align*}
		|\X^n_{s,t} - \Y^n_{s,t}| \leq &\sum_{i=0}^{N-1} |\X^n_{\tau_i,\tau_{i+1}} - \Y^n_{\tau_i,\tau_{i+1}}| + \sum_{l=1}^{n-1} \max_{i=1,\ldots,N} |\X^{n-l}_{s,\tau_i}| \sum_{i=0}^{N-1} |\X^l_{\tau_i,\tau_{i+1}} - \Y^l_{\tau_i,\tau_{i+1}}| \\
		&+ \sum_{l=1}^{n-1} \max_{i=1,\ldots,N} |\X^{n-l}_{s,\tau_i} - \Y^{n-l}_{s,\tau_i}| \sum_{i=0}^{N-1} |\Y^l_{\tau_i,\tau_{i+1}}|.
	\end{align*}
	Now we proceed as in step 2, using the estimates from step 1 to step 3 and the induction hypothesis.
\end{proof}

\bigskip

\bigskip

\textsc{Institute for Mathematics, TU Berlin, Strasse des 17.~Juni 136, 10623 Berlin, Germany}. 

Email: riedel@math.tu-berlin.de

\smallskip

\textsc{Mathematical and Oxford-Man Institutes, University of Oxford, 24-29 St.Giles, Oxford, OX1 3LB}. 

Email: xu@maths.ox.ac.uk

\end{document}